\newtheorem{lemma}{Lemma}[section]
\newtheorem{theorem}{Theorem}[section]
\newtheorem{corollary}{Corollary}[section]
\newtheorem{remark}{Remark}[section]
\numberwithin{equation}{section}
\DeclareMathOperator*{\osc}{osc}
\newcommand{\sref}[1]{Section~\ref{#1}}
\newcommand{\ssref}[1]{Subsection~\ref{#1}}
\newcommand{\tref}[1]{\textsl{Theorem~\ref{#1}}}
\newcommand{\lref}[1]{\textsl{Lemma~\ref{#1}}}
\newcommand{\cref}[1]{\textsl{Corollary~\ref{#1}}}
\newcommand{\eref}[1]{\textsc{(\ref{#1})}}
\newcommand{\norm}[1]{\left\Vert#1\right\Vert}
\newcommand{\R}{\mathbb R}
\newcommand{\Z}{\mathbb Z}
\newcommand{\MP}{\mathcal P}
\newcommand{\ve}{\varepsilon}
\newcommand{\ol}{\overline}
\newcommand{\p}{\partial}
\newcommand{\wt}{\widetilde}
\newcommand{\ra}{\rightarrow}
\begin{document}

\title[Global $W^{2,\delta}$ estimates for singular elliptic equations]
{Global $W^{2,\delta}$ estimates for singular
fully nonlinear elliptic equations
with $L^n$ right hand side terms}

\author{Dongsheng Li}
\address{Dongsheng Li: School of Mathematics and Statistics,
Xi'an Jiaotong University, Xi'an 710049, China;}
\email{\tt lidsh@mail.xjtu.edu.cn}

\author{Zhisu Li}
\address{Zhisu Li: School of Mathematics and Statistics,
Xi'an Jiaotong University, Xi'an 710049, China;}
\email{\tt lizhisu@stu.xjtu.edu.cn}

\thanks{%
This research is supported by NSFC.11671316.
Zhisu Li is the corresponding author.
}

\date{\today}


\begin{abstract}
We establish in this paper
\emph{a priori} global $W^{2,\delta}$ estimates
for singular fully nonlinear elliptic equations
with $L^n$ right hand side terms.
The method is to slide
paraboloids and barrier functions vertically
to touch the solution of the equation,
and then to estimate the measure of the contact set
in terms of the measure of the vertex point set.
To derive global estimates from $L^n$ data,
the Hardy-Littlewood maximal functions, appropriate localizations
and a new type of covering argument are adopted.
These methods also provide us a more direct proof
of the $W^{2,\delta}$ estimates
for (nonsingular) fully nonlinear elliptic equations
established by L. A. Caffarelli and X. Cabr\'{e}.
\end{abstract}

\keywords
{
fully nonlinear elliptic equations,
global $W^{2,\delta}$ estimates,
singular elliptic equations;
\emph{MSC (2010):} 35B45, 35D40, 35J60, 35J75.
}

\maketitle

\section{Introduction}

In the present paper,
we derive \emph{a priori} global $W^{2,\delta}$ estimates
for solutions of the singular elliptic equations
including those of the following types:
\begin{enumerate}[\quad(a)]
\item
the equation
\begin{equation}\label{eq.trAD}
\mathrm{tr}(A(x)D^2u)+b(x)\cdot Du=f(x)|Du|^{\gamma},
\end{equation}
where $0\leq\gamma<1$, $A$ is uniformly elliptic,
$b$ is bounded and $f\in L^n$;

\item
the singular fully nonlinear elliptic equation
\begin{equation}\label{eq.DugaF}
|Du|^{-\gamma}F(D^2u,Du,u,x)=f(x),
\end{equation}
where $0\leq\gamma<1$,
$F(0,0,\cdot,\cdot)\equiv0$,
$F$ is uniformly elliptic
(see \cite{CC}), and $f\in L^n$;

\item
the famous $p$-Laplace equation
\begin{equation}\label{eq.ple}
\Delta_p u:=\mbox{div}\left(|Du|^{p-2}Du\right)=f(x),
\end{equation}
where $1<p\leq2$ and $f\in L^n$.
\end{enumerate}

For brevity, we consider solutions
of singular fully nonlinear elliptic inequalities of certain type
which include solutions of all the above equations.
Namely, our main result will be stated
in a more generalized form as follows.
\begin{theorem}\label{th.w2d}
Let $0<\lambda\leq\Lambda<+\infty$ and $0\leq\gamma<1$.
Suppose $u\in C^0(\ol{B_1})$ is a viscosity solution
of the singular fully nonlinear elliptic inequalities
\begin{equation}\label{eq.ineqs}
|Du|^{-\gamma}\MP_{\lambda,\Lambda}^-(D^2u)-|Du|^{1-\gamma}\leq f
\leq |Du|^{-\gamma}\MP_{\lambda,\Lambda}^+(D^2u)+|Du|^{1-\gamma}
~~\mathrm{in}~B_1,
\end{equation}
where $B_1$ is the unit open ball of $\R^n$,
$\MP_{\lambda,\Lambda}^\pm$
are the Pucci extremal operators,
and $f\in C^0\cap L^n(B_1)$.
Then $u\in W^{2,\delta}(B_1)$ and
\begin{equation}\label{eq.w2d-est}
\norm{u}_{W^{2,\delta}(B_1)}
\leq C\left(\norm{u}_{L^{\infty}(B_1)}
+\norm{f}_{L^n(B_1)}^{\frac{1}{1-\gamma}}\right)
\end{equation}
for any $\delta\in(0,\sigma)$,
where $\sigma=\sigma(n,\lambda,\Lambda,\gamma)>0$
and $C=C(n,\lambda,\Lambda,\gamma,\delta)>0$.
\end{theorem}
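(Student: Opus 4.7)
My plan is to follow the paraboloid-sliding technology of Caffarelli-Cabré, adapting it in three ways to accommodate the singular weight $|Du|^{-\gamma}$, the global reach up to $\partial B_1$, and the low-regularity $L^n$ datum, as announced in the abstract. The first move is normalization: since \eqref{eq.ineqs} is invariant under $(u,f)\mapsto(K^{-1}u,K^{-(1-\gamma)}f)$, setting $K=\norm{u}_{L^\infty(B_1)}+\norm{f}_{L^n(B_1)}^{1/(1-\gamma)}$ reduces the problem to $\norm{u}_{L^\infty}\leq 1$ and $\norm{f}_{L^n}\leq 1$, and the exponent $1/(1-\gamma)$ in \eqref{eq.w2d-est} is dictated precisely by this scaling.

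\textbf{Distribution tail via sliding.} The $W^{2,\delta}$ estimate reduces, by integrating in layers, to a tail bound
\[
|\{x\in B_1:\Theta(u)(x)>M\}|\leq CM^{-\sigma},
\]
where $\Theta(u)(x)$ is the infimum of the openings of paraboloids that touch $u$ from both sides at $x$; equivalently, $\Theta(u)(x)\leq M$ exactly on $\ol G_M\cap\ul G_M$, the intersection of the sets of points touched from above/below by paraboloids of opening $M$. This tail is obtained by iterating a one-step decay
\[
|B_1\setminus(\ol G_{M_0t}\cap\ul G_{M_0t})|\leq \mu\,|B_1\setminus(\ol G_t\cap\ul G_t)|+\bigl|\{x\in B_1:\mathcal{M}(|f|^n\chi_{B_1})(x)>ct^{(1-\gamma)n}\}\bigr|
\]
over dyadic $t=M_0^k$ with absolute constants $\mu<1$ and $M_0>1$; summing and invoking the weak-$(1,1)$ bound for the Hardy-Littlewood maximal operator $\mathcal{M}$ yields the power decay. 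The unusual threshold $t^{(1-\gamma)n}$ in place of the classical $t^n$ is the singular correction, and is traced back to the singular ABP-type bound $\sup u^-\leq C\norm{f^+}_{L^n}^{1/(1-\gamma)}$ used at the core of the one-step argument.

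\textbf{One-step decay and boundary reach.} The one-step decay is proved by vertically sliding, on each dyadic subcube $Q$ of $B_1$, a paraboloid of opening $M_0t$ together with an auxiliary concave barrier $\psi_Q$ from below $u$ until first contact, and counting contact points via the area formula applied to $D(u+\psi_Q)$. Young's inequality $|Du|^\gamma|f|\leq\epsilon|Du|+C_\epsilon|f|^{1/(1-\gamma)}$ is applied \emph{pointwise inside the ABP step} to absorb the gradient contribution arising when \eqref{eq.ineqs} is multiplied by $|Du|^\gamma$. For cubes $Q$ straddling $\partial B_1$ I replace $\psi_Q$ by a boundary-adapted barrier (with controlled sign and gradient on $Q\setminus B_1$) and pack the annulus $B_1\setminus B_{1-r}$ by such boundary cubes; this is the ``new covering argument'' alluded to in the abstract.

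\textbf{Main obstacle.} The hardest piece will be establishing the one-step decay \emph{uniformly up to $\partial B_1$}: the boundary barrier $\psi_Q$ must simultaneously (i) satisfy $\MP_{\lambda,\Lambda}^-(D^2\psi_Q)\geq c>0$ on a definite fraction of $Q$, (ii) have gradient of size comparable to $M_0t$ so that contact points lie in $\ul G_{M_0t}$, and (iii) retain a sign and decay profile on $Q\setminus B_1$ that keeps the singular ABP inequality effective. Reconciling these constraints while preserving the singular threshold $t^{(1-\gamma)n}$ in the maximal-function term, so that the weak-$(1,1)$ estimate for $\mathcal{M}$ and the normalization $\norm{f}_{L^n}\leq 1$ close the geometric iteration, is where essentially all the technical difficulty resides.
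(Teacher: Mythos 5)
Your high-level plan --- normalize by scaling, slide paraboloids together with auxiliary concave barriers, count contact points via the area formula, derive a dyadic decay of $|B_1\setminus T_{M^k}|$ with a singular threshold $t^{(1-\gamma)n}$ in the maximal-function term, and close via weak-$(1,1)$ --- matches the paper's strategy in spirit. However, there is a genuine gap in exactly the place you flag as the ``hardest piece,'' and the paper avoids it by a structural insight you have not identified.

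You propose to reach $\partial B_1$ by covering the annulus with boundary cubes and constructing boundary-adapted barriers $\psi_Q$ satisfying constraints (i)--(iii) on $Q\setminus B_1$. The paper never needs such barriers. Its density lemma is stated for balls $B_r(x_0)\subset B_1$ only, and a Vitali-type ball covering lemma replaces the Calder\'on--Zygmund cube decomposition; what makes this suffice globally is that the contact sets are \emph{automatically interior}. Concretely: (a) paraboloids are slid with vertices $y$ confined to $\ol{B_1}$; (b) the oscillation bound $\osc_{B_1}u\leq 1/8$ forces $|\wt{x}-\wt{y}|\leq 1/2$, so $T^-_K(\ol{B_{1/2}})\subset B_1$; and (c) in the density lemma, the vertex set $V$ is a small ball near the already-interior point $x_2$, and the inequality $|\wt{x}-y|<r/4$ shows $T^-_{KM}(V)\subset B_{r/2}(x_2)\subset B_1$. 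Thus the ABP-type counting never sees $\partial B_1$, and the covering lemma is applied to sets living entirely inside. Your route, by contrast, still has to produce a barrier on $Q\setminus B_1$ with sign, curvature and gradient simultaneously compatible with the singular ABP inequality, and you do not indicate how to do that --- this is precisely the step that would most likely fail or, at minimum, require a separate and substantially harder construction. The Young-inequality absorption $|Du|^\gamma|f|\leq\epsilon|Du|+C_\epsilon|f|^{1/(1-\gamma)}$ is also not how the paper handles the singular weight: it instead bounds $|Du|$ and each eigenvalue of $D^2u$ at the contact point directly by $CK$ and $CK^\gamma f$, feeding $K^{-(1-\gamma)}f$ into the Jacobian determinant. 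Your version might be made to work, but the gradient term it produces would have to be reabsorbed into the elliptic term, which is an extra step you do not address.

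So: same overall architecture, but the boundary treatment you propose is a different and harder route than the paper's, and as described it is incomplete. The fix is to abandon boundary cubes and barriers altogether, keep vertices interior, prove that contact points stay a definite distance from $\partial B_1$, and replace the cube decomposition with a ball-based Vitali covering so that the density lemma (valid only on balls contained in $B_1$) is the only geometric input needed.
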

This theorem improves essentially our previous results in \cite{Li},
where the right hand side term $f$ of the equation
is required to be $L^\infty$,
and the estimate corresponding to \eref{eq.w2d-est} is just
\[\norm{u}_{W^{2,\delta}(B_1)}
\leq C\left(\norm{u}_{L^{\infty}(B_1)}
+\norm{f}_{L^\infty(B_1)}^{\frac{1}{1-\gamma}}\right).\]
The main contribution here is
in developing a systematic way to deal with the $L^n$ data,
and in using delicate localization and covering arguments
to derive global estimates from it in a straightforward way.
Roughly speaking, first, by sliding paraboloids and
some appropriate localizing barrier functions
from below and above to touch the solution,
and then estimating the low bound of
the measure of the set of contact points
by the measure of the set of vertex points,
we establish a new density estimate which is corresponding to
the classical Alexandroff-Bakelman-Pucci (ABP for short) estimate;
then, applying a new kind of covering technique with careful localization,
we obtain the desired global $W^{2,\delta}$ estimates.
These methods also provide us a more direct proof
of the interior $W^{2,\delta}$ estimates
for (nonsingular) fully nonlinear elliptic equations
established by L. A. Caffarelli and X. Cabr\'{e} \cite{CC}
(which now is recovered by \tref{th.w2d} as special cases,
rather than by our previous results in \cite{Li}),
although the underlying key ideas are the same.

The sliding paraboloid argument we mentioned above
has originated in the work of X. Cabr{\'e} \cite{Cab}
and continued in the work of O. Savin \cite{Sav},
see also \cite{IS}, \cite{CF} and \cite{Dan}.
We now give some other historical remarks concerning
the $W^{2,\delta}$ estimates and
the singular elliptic equations.

In 1986, F.-H. Lin \cite{Lin} first established
the $W^{2,\delta}$ estimates
$\norm{D^2u}_{L^\delta(B_1)}\leq C\norm{f}_{L^n(B_1)}$
for solutions of the linear uniformly elliptic equations
$a_{ij}(x)u_{ij}=f(x)$ with $u=0$ on $\p B_1$
and $f\in L^n(B_1)$. His method employs
the Fabes-Stroock type reverse H{\"o}lder inequality,
estimates for Green's function and the ABP estimate.
Later, L. A. Caffarelli and X. Cabr\'{e} \cite{CC}
(see also \cite{Caf}) applied ABP estimate,
Calder\'{o}n-Zygmund cube decomposition technique,
barrier function method and touching by tangent paraboloid method
to obtain interior $W^{2,\delta}$ estimates
for viscosity solutions of the fully nonlinear elliptic inequalities
\[\MP_{\lambda,\Lambda}^-(D^2u)\leq f
\leq \MP_{\lambda,\Lambda}^+(D^2u),\]
which can be viewed as an original form
of our inequalities \eref{eq.ineqs}.
As we know, the $W^{2,\delta}$ estimates
have several important applications
in the study of the elliptic partial differential equations,
such as deriving $W^{2,p}$ estimates
($p$ is large, see \cite{Caf} or \cite{CC}),
proving partial regularity
(see \cite{ASS} or \cite{Dan}),
and exploring the convergence of blow down solutions
(see \cite{Yuan}), and so on.

The investigation of singular elliptic equations
of the types \eref{eq.trAD} and \eref{eq.DugaF}
has made much progress in recent years.
The corresponding comparison principle (see \cite{BD1}),
ABP estimate (see \cite{DFQ1}),
Harnack inequality (see \cite{DFQ2})
and $C^{1,\alpha}$ estimate (see \cite{BD2})
have already been established.
To derive $W^{2,\delta}$ estimate for singular equations,
one may naturally think that it can be an easy consequence
of the classical results \cite{CC},
once we have a universal control of $\norm{Du}_{L^\infty}$,
for instance, some $C^{1,\alpha}$ estimate, like \cite{BD2}.
But this is always highly restricted and sometimes fails to be valid.
Our method can deal with a large class of equations
as illustrated above, since it does not depend on
any \emph{a priori} estimate of $Du$
and it does not use maximum principles.
Moreover, our estimates are global,
and the proof is straightforward
in the sense that we do not need
to separate it into interior estimates and boundary estimates.
Instead, the singular case and the nonsingular case,
the interior case and the boundary case,
are treated all together
by using delicate localization and a new type of covering lemma.

For $W^{2,\delta}$ estimate of
the singular $p$-Laplace equation \eref{eq.ple},
P. Tolksdorf \cite{Tol} proved that
each $W^{1,p}\cap C^0(B_1)$ weak solution
of \eref{eq.ple} in $B_1$ with $f\in L^{\infty}(B_1)$
is $W^{2,p}_{loc}\cap W^{1,p+2}_{loc}(B_1)$.
Since the $p$-Laplacian can be written as
\[
\Delta_p u=|Du|^{-(2-p)}
\left(\delta_{ij}-(2-p)\cdot\frac{D_{i}uD_{j}u}{|Du|^2}\right)D_{ij}u,
\]
applying our \tref{th.w2d} to
the singular $p$-Laplace equation \eref{eq.ple}
with $f\in L^n(B_1)$,
we obtain a new global $W^{2,\delta}$ estimate.

\medskip

The paper is organized as follows.
In \sref{se.pre}, we give some notations
and collect some preliminary lemmas
including a new type of covering lemma.
In \sref{se.gw2de} we first normalize \tref{th.w2d} to
\lref{le.w2dc} by rescaling argument in \ssref{sse.th-normto-le},
then in \ssref{sse.keylem},
we establish the key density lemma
and the measure decay estimate lemma,
with the help of them
we finally give the proof of \lref{le.w2dc}
in \ssref{sse.pf-le}.

\section{Some preliminaries}\label{se.pre}

In this paper,
we denote by $S(n)$ the linear space
of symmetric $n\times n$ real matrices
and $I$ the identity matrix.

\medskip

To make the \emph{sliding and touching} idea more rigorous and clear,
we introduce the following notations and terminologies.

Given two functions
$u$ and $v:\Omega\subset\R^n\ra\R$
and a point $x_0\in\Omega$,
we say that
\emph{$u$ touches $v$ by below at $x_0$ in $\Omega$}
and denote it briefly by \emph{$u\overset{x_0}\eqslantless v$ in $\Omega$}, if
$u(x_0)=v(x_0)$ and $u(x)\leq v(x)$, $\forall x\in\Omega$.

For a given continuous function
$u:U\subset\R^n\ra\R$,
we slide the concave paraboloid
(of opening $\kappa>0$ and of vertex $y$)
\[-\frac{\kappa}{2}|x-y|^2+C\]
vertically from below in $U$ (by increasing or decreasing $C$)
till it touches the graph of $u$ for the first time.
If the contact point is $x_0$,
we then have
\[C=u(x_0)+\frac{\kappa}{2}|x_0-y|^2
=\underset{x\in U}\inf\left(u(x)+\frac{\kappa}{2}|x-y|^2\right).\]

Given a closed set $V\subset\R^n$
and a continuous function $u:B_1\ra\R$,
we now introduce the definitions of \emph{the contact sets} as follows:
\begin{eqnarray}\label{eq.Tk-def}
T_\kappa^{-}(V)&:=&T_\kappa^{-}(u,V)
:=\bigg\{x_0\in B_1\mid\exists y\in V~\mbox{such~that}~\nonumber\\
&~&~~u(x_0)+\frac{\kappa}{2}|x_0-y|^2
=\underset{x\in B_1}\inf\left(u(x)+\frac{\kappa}{2}|x-y|^2\right)\bigg\}\nonumber\\
&=&~\bigg\{x_0\in B_1\mid\exists y\in V~\mbox{such~that}\nonumber\\
&~&~~-\frac{\kappa}{2}|x-y|^2+u(x_0)+\frac{\kappa}{2}|x_0-y|^2
~\overset{x_0}\eqslantless u~\mbox{in}~B_1\bigg\},
\end{eqnarray}
\[T_\kappa^+(V):=T_\kappa^+(u,V):=T_\kappa^-(-u,V),\]
and
\[T_\kappa(V):=T_\kappa(u,V):=T_\kappa^-(u,V)\cap T_\kappa^+(u,V).\]
For simplicity, we will write $T_\kappa^\pm$
instead of $T_\kappa^\pm(u,\ol{B_1})$ when there is no confusion.
It is obvious that $T_\kappa^\pm$ are closed in $B_1$.
We remark further that the contact set $T^-_\kappa(u,V)$
has the twofold uses of $\{u=\Gamma_u\}$
and $\underline{G}_M(u,\Omega)$ in \cite{CC}:
by the former, we communicate with the equation;
by the later, we measure the second derivatives of the solution.

\medskip

Given $0<\lambda\leq\Lambda$, we define
the \emph{Pucci extremal operators} (see also \cite{CC}) by
\[\MP^+_{\lambda,\Lambda}(X)
:=\lambda\sum_{e_i(X)<0}e_i(X)+\Lambda\sum_{e_i(X)>0}e_i(X),\]
and
\[\MP^-_{\lambda,\Lambda}(X)
:=\Lambda\sum_{e_i(X)<0}e_i(X)+\lambda\sum_{e_i(X)>0}e_i(X),\]
where $X\in S(n)$ and $e_i(X)$ denote the eigenvalues of $X$.
For brevity, we will always
write $\MP^{\pm}_{\lambda,\Lambda}(X)$ as $\MP^{\pm}(X)$.
For completeness and convenience,
we now collect some basic properties
of the Pucci extremal operators as follows:
\begin{enumerate}[(i)]
\item
$\MP^\pm(rX)=r\MP^\pm(X)$,
$\MP^\pm(-rX)=-r\MP^\mp(X)$,
$\forall X\in S(n)$, $\forall r\geq0$.
\item
$\MP^-(X)+\MP^-(Y)\leq \MP^-(X+Y)
\leq \MP^-(X)+\MP^+(Y)
\leq \MP^+(X+Y)$ $\leq \MP^+(X)+\MP^+(Y)$,
$\forall X,Y\in S(n)$.
\item
If $X,Y\in S(n)$ and $X\leq Y$,
then $\MP^\pm(X)\leq \MP^\pm(Y)$.
\item
$\MP^+_{\lambda,\Lambda}(X)=\Lambda\mathrm{tr}X$
and $\MP^-_{\lambda,\Lambda}(X)=\lambda\mathrm{tr}X$,
provided $X\in S(n)$ and $X\geq0$.
\end{enumerate}

\medskip

Now we recall the definition of the viscosity solution (see \cite{CC}).
For example, we say that $u\in C^0(B_1)$ \emph{satisfies}
\[
F(D^2u,Du,u,x)\leq f~~\mbox{in}~B_1
\]
\emph{in the viscosity sense},
if $\forall\varphi\in C^2(B_1)$, $\forall x_0\in B_1$,
\[
\varphi\overset{x_0}\eqslantless u~\mbox{in}~U(x_0)
\Rightarrow
F\left(D^2\varphi(x_0),D\varphi(x_0),\varphi(x_0),x_0\right)\leq f(x_0),
\]
where $U(x_0)\subset B_1$ is an open neighborhood of $x_0$.

\bigskip

For $g\in L^1(\Omega)$,
the \emph{Hardy-Littlewood maximal function} of $g$
is defined by
\[\mathcal{M}(g)(x):=\underset{r>0}\sup\frac{1}{|B_r(x)|}
\underset{B_r(x)\cap\Omega}\int|g(y)|dy,~
\forall~x\in\Omega.\]
We will use the well known weak type (1,1) property
of the Hardy-Littlewood maximal operator $\mathcal{M}$, that is
\[|\{x\in\Omega:\mathcal{M}(g)(x)>t\}|
\leq Ct^{-1}\norm{f}_{L^1(\Omega)},~\forall~t>0,\]
where $C=C(n)>0$ depends only on the dimension $n$.

\medskip

The following equivalent description
of $L^p$-integrability is also needed.
\begin{lemma}\label{le.lp} (see \cite[Lemma 7.3.]{CC})
Let g be a nonnegative and measurable function in a
bounded domain $\Omega\subset\R^n$.
Suppose that $\eta>0$, $M>1$ and $0<p<\infty$.
Then
\[g\in L^p(\Omega)~\Leftrightarrow~s
:=\sum_{k=1}^{\infty}M^{pk}
\left|\left\{x\in\Omega\mid g(x)>\eta M^k\right\}\right|<\infty,\]
and
\[C^{-1}s\leq\|g\|_{L^p(\Omega)}^p\leq C\left(s+|\Omega|\right),\]
where $C>0$ is a constant depending only on $\eta$, $M$ and $p$.\\
\end{lemma}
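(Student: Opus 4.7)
The plan is to reduce the claim to the classical layer-cake (Cavalieri) representation
\[
\|g\|_{L^p(\Omega)}^p \;=\; p\int_0^\infty t^{p-1}\,\mu(t)\,dt,
\qquad \mu(t):=\bigl|\{x\in\Omega:g(x)>t\}\bigr|,
\]
and then to compare the integral on the right with the geometric-type sum $s$ by splitting $(0,\infty)$ into the dyadic-in-$M$ windows $(\eta M^{k-1},\eta M^k]$ for $k\geq 1$, together with the initial slab $(0,\eta]$. On each window $\mu$ is monotone (nonincreasing), so it is bounded above and below on that window by its values at the endpoints, which are precisely the level sets $|\{g>\eta M^{k-1}\}|$ and $|\{g>\eta M^k\}|$ appearing in $s$.

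First I would establish the upper bound $\|g\|_{L^p}^p\leq C(s+|\Omega|)$. On the window $(\eta M^{k-1},\eta M^k]$ one has $\mu(t)\leq \mu(\eta M^{k-1})$, so
\[
p\int_{\eta M^{k-1}}^{\eta M^k} t^{p-1}\mu(t)\,dt \;\leq\; \eta^p(M^p-1)M^{p(k-1)}\,\mu(\eta M^{k-1}),
\]
and summing over $k\geq 1$ produces a constant multiple of $s+|\Omega|$, where the $|\Omega|$ accounts for the $k=1$ term (which needs $\mu(\eta)\leq|\Omega|$) and for the initial slab $\int_0^\eta t^{p-1}\mu(t)\,dt\leq \eta^p|\Omega|/p$.

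For the lower bound $\|g\|_{L^p}^p\geq C^{-1}s$, I would use the opposite monotonicity: on $(\eta M^{k-1},\eta M^k]$, $\mu(t)\geq \mu(\eta M^k)$, so
\[
p\int_{\eta M^{k-1}}^{\eta M^k} t^{p-1}\mu(t)\,dt \;\geq\; \eta^p M^{-p}(M^p-1)\,M^{pk}\,\mu(\eta M^k),
\]
and summing over $k\geq 1$ yields a constant multiple of $s$. Combining the two one-sided bounds with constants depending only on $\eta$, $M$, $p$ gives both the equivalence $g\in L^p\Leftrightarrow s<\infty$ and the explicit two-sided estimate.

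No step here looks like a genuine obstacle: the only mild subtlety is making sure the index ranges line up (the sum $s$ starts at $k=1$, while the integral naturally splits into windows that include an extra piece $(0,\eta]$), and this is exactly where the additive $|\Omega|$ on the right of the upper bound comes in. The whole argument is a bookkeeping exercise on Cavalieri's identity and does not require any structure of $\Omega$ beyond finite measure.
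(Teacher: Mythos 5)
Your proof is correct and complete: the layer-cake identity, the dyadic-in-$M$ decomposition of $(0,\infty)$, and the monotonicity of the distribution function on each window give exactly the claimed two-sided bound, with the additive $\abs{\Omega}$ absorbing the $(0,\eta]$ slab and the $j=0$ endpoint. The paper itself offers no proof — it merely cites \cite[Lemma~7.3]{CC} — and the argument there is the same standard bookkeeping on Cavalieri's formula, so there is no genuine divergence of approach to report.
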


\medskip

Finally, we introduce the following
Vitali-type covering lemma modified from
those in \cite{IS} and \cite{Li}.
This lemma plays a similar role as
the Calder{\'o}n-Zygmund cube decomposition lemma
(see \cite{Caf} and \cite{CC}) usually does,
but with the help of it we can obtain global estimates directly.
\begin{lemma}\label{le.cl}
$\big((\theta, \Theta)$-$\mathrm{type~covering~lemma\big)}$.
Let $E\subset F\subset B_1$
be measurable sets and $0<\theta<\Theta<1$ such that
\begin{enumerate}[(i)]
\item $|E|>\theta|B_1|$, and
\item for any ball $B\subset B_1$, if $|B\cap E|\geq\theta|B|$,
then $|B\cap F|\geq\Theta|B|$.
\end{enumerate}
Then
\[|B_1\setminus F|\leq
\left(1-\frac{\Theta-\theta}{5^n}\right)|B_1\setminus E|.\]
\end{lemma}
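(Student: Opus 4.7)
The plan is to recast the claimed inequality, via $|B_1 \setminus F| - |B_1 \setminus E| = |E| - |F| = -|F \setminus E|$ (using $E \subset F$), into the equivalent lower bound
\[|F \setminus E| \geq \frac{\Theta - \theta}{5^n}\, |B_1 \setminus E|,\]
and then to establish it by a Vitali-type cover of (almost every point of) $B_1 \setminus E$ by balls that sit exactly at the critical density $|B \cap E|/|B| = \theta$, so that hypothesis (ii) applies to each covering ball.

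The main step, and the one I expect to be delicate, is to show that every Lebesgue point $x$ of $B_1 \setminus E$ lies in some threshold ball $B_x$ with $x \in B_x \subset B_1$ and $|B_x \cap E| = \theta\,|B_x|$ exactly. A ball \emph{centered} at $x$ is inadequate when $x$ is close to $\partial B_1$, since the largest admissible such ball $B_{1-|x|}(x)$ may still have density of $E$ below $\theta$. Instead, I will slide along the one-parameter family
\[B(t) := B_{r(t)}((1-t)x), \qquad r(t) := (1-t)\,r_0 + t, \qquad t \in [0,1],\]
whose center moves from $x$ to the origin and whose radius grows from a small $r_0 \in (0, 1-|x|]$ to $1$. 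A direct check gives $x \in B(t) \subset B_1$ throughout. The density $\psi(t) := |B(t) \cap E|/|B(t)|$ is continuous in $t$ by dominated convergence; by the Lebesgue density property, $\psi(0) \to 0$ as $r_0 \to 0$, while hypothesis (i) gives $\psi(1) = |E|/|B_1| > \theta$. For sufficiently small $r_0$, the intermediate value theorem then supplies a $t^* \in (0,1)$ with $\psi(t^*) = \theta$, and I set $B_x := B(t^*)$.

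The remainder of the argument is routine. The family $\{B_x\}$ has radii bounded by $1$ and covers $B_1 \setminus E$ up to a null set, so the classical Vitali covering lemma produces a countable disjoint subfamily $\{B_i\}$ with $\bigcup_i 5 B_i \supset \bigcup_x B_x$. Each $B_i \subset B_1$ satisfies $|B_i \cap E| = \theta\,|B_i|$ by construction, so hypothesis (ii) yields $|B_i \cap F| \geq \Theta\,|B_i|$, and consequently
\[|B_i \cap (F \setminus E)| = |B_i \cap F| - |B_i \cap E| \geq (\Theta - \theta)\,|B_i|.\]
Summing over the disjoint family $\{B_i\}$ inside $B_1$ and using $|B_1 \setminus E| \leq |\bigcup_i 5 B_i| \leq 5^n \sum_i |B_i|$ will deliver the reformulated lower bound and hence the lemma.
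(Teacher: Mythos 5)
Your proof is correct, and it follows the same overall strategy as the paper's: reformulate the claim as the lower bound $|F\setminus E|\geq\frac{\Theta-\theta}{5^n}|B_1\setminus E|$, attach to almost every point $x$ of $B_1\setminus E$ a ``threshold'' ball $B_x\ni x$ with $B_x\subset B_1$ to which hypothesis (ii) applies, and finish with the Vitali $5r$-covering lemma. The genuine difference is in how the threshold ball is produced. The paper selects the \emph{largest} ball through $x$, contained in $B_1$, with $|B\cap E|\leq\theta|B|$; it then argues by contradiction that this maximal ball must satisfy $|B\cap F|\geq\Theta|B|$, since otherwise one could enlarge it slightly inside $B_1$ (using $|E|>\theta|B_1|$ to rule out $B=B_1$) and apply the contrapositive of (ii) to contradict maximality. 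You instead slide a one-parameter family $B(t)=B_{r(t)}((1-t)x)$ from a tiny ball centered at $x$ (density of $E$ near $0$, by the Lebesgue point property) out to $B_1$ itself (density $>\theta$, by (i)), verify $x\in B(t)\subset B_1$ along the way, and use the intermediate value theorem for the continuous density $\psi(t)=|B(t)\cap E|/|B(t)|$ to land exactly on $\psi(t^*)=\theta$; hypothesis (ii) then applies directly, with no contradiction step. Your route buys a bit of cleanness: the exact-equality ball makes the application of (ii) immediate, and it sidesteps the mild technical point in the paper of whether a maximal such ball (with $x$ in its \emph{open} interior) actually exists, which the paper passes over silently. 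The paper's route is shorter to state but leans on that existence and on the enlargement argument. Both are correct; the rest of the computation (the identity $|B_i\cap(F\setminus E)|=|B_i\cap F|-|B_i\cap E|$ using $E\subset F$, summing over the disjoint Vitali subfamily, and the factor $5^n$) is identical in the two proofs.
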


\begin{proof}
It suffices to prove that
\[|F\setminus E|\geq
\frac{\Theta-\theta}{5^n}|B_1\setminus E|.\]

By the Lebesgue differentiation theorem,
there exists $S\subset B_1\setminus E$,
such that $|S|=|B_1\setminus E|$ and
\[\lim_{r\ra0}\frac{|B_r(x)\cap E|}{|B_r(x)|}=0,~\forall x\in S.\]
Hence, for each $x\in S$,
there exist balls, say $B$, satisfying
$x\in B\subset B_1$ and $|B\cap E|\leq\theta|B|$;
we choose one of the biggest of them and denote it by $B^x$.

We assert that $|B^x\cap F|\geq\Theta|B^x|$.
Otherwise, suppose that $|B^x\cap F|<\Theta|B^x|$.
Since $|B_1\cap E|>\theta|B_1|$,
$|B^x\cap E|\leq\theta|B^x|$
and hence $B^x\subsetneqq B_1$,
we may enlarge $B^x$ a little bit, denoted by $\wt{B}^x$,
such that $B^x\subset\wt{B}^x\subset B_1$
and $|\wt{B}^x\cap F|<\Theta|\wt{B}^x|$.
By the hypothesis \emph{(ii)} of the lemma,
$|\wt{B}^x\cap E|<\theta|\wt{B}^x|$,
which contradicts the definition of $B^x$.

Furthermore, since $|B^x\setminus E|\geq(1-\theta)|B^x|$,
it follows from the above assertion that
$|B^x\cap F\setminus E|\geq(\Theta-\theta)|B^x|$.

Now consider the covering
$\underset{x\in S}\cup B^x \supset S$.
By the Vitali covering lemma,
there exists an at most countable set of $x_i\in S$,
such that $\{B^{x_i}\}_{i}$ are disjoint
and $\underset{i}\cup 5B^{x_i}\supset S$.
Hence we have
\begin{eqnarray*}
|F\setminus E|
&\geq&\left|\left(\underset{i}\cup B^{x_i}\right)\cap F\setminus E\right|
=\left|\underset{i}\cup\left(B^{x_i}\cap F\setminus E\right)\right|
=\underset{i}\sum\left|B^{x_i}\cap F\setminus E\right|\\
&\geq&(\Theta-\theta)\underset{i}\sum\left|B^{x_i}\right|
=\frac{\Theta-\theta}{5^n}\underset{i}\sum\left|5B^{x_i}\right|
\geq\frac{\Theta-\theta}{5^n}\left|\underset{i}\cup 5B^{x_i}\right|\\
&\geq&\frac{\Theta-\theta}{5^n}|S|
=\frac{\Theta-\theta}{5^n}\left|B_1\setminus E\right|.
\end{eqnarray*}
This completes the proof of the lemma.
\end{proof}

\section{Proof of \tref{th.w2d}}\label{se.gw2de}

\subsection{\tref{th.w2d} can be normalized to \lref{le.w2dc}}\label{sse.th-normto-le}
\quad

To prove \tref{th.w2d}, it suffices to prove the follow lemma.
\begin{lemma}\label{le.w2dc}
Let $0\leq\gamma<1$.
Assume that
$u\in C^0(\ol{B_1})$
satisfies (\ref{eq.ineqs})
with $f\in C^0\cap L^n(B_1)$ in the viscosity sense.
Then there exist constants
$\sigma=\sigma(n,\lambda,\Lambda,\gamma)>0$
and $\epsilon_1=\epsilon_1(n,\lambda,\Lambda)>0$,
such that for any $\delta\in(0,\sigma)$,
if $\norm{u}_{L^{\infty}(B_1)}\leq 1/16$
and $\norm{f}_{L^n(B_1)}\leq \epsilon_1$,
then
\[\norm{u}_{W^{2,\delta}(B_1)}\leq C,\]
where $C=C(n,\lambda,\Lambda,\gamma,\delta)>0$.
\end{lemma}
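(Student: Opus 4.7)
The plan is to follow the Caffarelli--Cabr\'e paradigm — control $D^2u$ pointwise on the contact sets where paraboloids touch, and show that the complement of these contact sets decays geometrically as the paraboloid opening grows — but with two substantial modifications to cope with the singular structure and the $L^n$ datum: (a) the barrier/sliding step is reformulated so that the vertex set, rather than the right--hand side, is the object being measured; (b) the classical Calder\'on--Zygmund dyadic decomposition is replaced by the $(\theta,\Theta)$-covering lemma \lref{le.cl}, which makes the argument purely density--theoretic and therefore global.

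\emph{Step 1 (reduction to a measure--decay estimate).} A standard measure--theoretic fact based on the definition \eref{eq.Tk-def} shows that, at a.e. $x_0\in T_\kappa:=T_\kappa^-\cap T_\kappa^+$, the function $u$ is pointwise twice differentiable with $\|D^2u(x_0)\|\leq C(n)\kappa$. By \lref{le.lp} applied to $g=\|D^2u\|$ with some $M>1$ to be chosen, it therefore suffices to establish a bound of the form
\[
\left|B_1\setminus T_{M^k}(u,\ol{B_1})\right|\leq C\mu^k,\qquad k=1,2,\dots,
\]
for some $\mu=\mu(n,\lambda,\Lambda,\gamma)\in(0,1)$ and $M=M(n,\lambda,\Lambda,\gamma)>1$; the exponent $\sigma$ will be any number smaller than $\log(1/\mu)/\log M$.

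\emph{Step 2 (key density lemma).} The engine is a local ABP--type density estimate: there exist universal constants $\theta,\Theta\in(0,1)$, $M>1$ and $\epsilon_0>0$ such that, for every ball $B\subset B_1$, if the vertex set
\[
V_B:=\Big\{y\in B\cap\tfrac12 B_1:\mathcal{M}(|f|^n)(y)\leq\epsilon_0\Big\}\cap T_1(u,\ol{B_1})
\]
satisfies $|V_B|\geq\theta|B|$, then $|B\cap T_M(u,\ol{B_1})|\geq\Theta|B|$. The proof slides a paraboloid of opening $M$ and vertex $y\in V_B$, scaled to $B$, and adds or subtracts an explicit localizing barrier supported near $B$ designed so that the Pucci inequalities in \eref{eq.ineqs} apply at any interior touching point. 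The control $\mathcal{M}(|f|^n)(y)\leq\epsilon_0$ on $V_B$ provides the quantitative $L^n$ input to close the ABP computation, while the factor $|Du|^{-\gamma}$ is absorbed by choosing $M$ large — here the crucial identity is that the touching paraboloid fixes the size of $|Du|$ at the contact point in terms of $\kappa$ and $\mathrm{dist}(x_0,y)$, so $|Du|^{-\gamma}$ and $|Du|^{1-\gamma}$ can be bounded explicitly in the final estimate.

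\emph{Step 3 (iteration via the covering lemma).} With the density lemma, iterate \lref{le.cl}: set
\[
E_k:=T_{M^k}(u,\ol{B_1})\cap\{\mathcal{M}(|f|^n)\leq\epsilon_0\},\qquad F_k:=T_{M^{k+1}}(u,\ol{B_1}).
\]
The density lemma, rescaled to the opening $M^k$, verifies hypothesis (ii) of \lref{le.cl} for the pair $(E_k,F_k)$; the hypothesis $\|u\|_{L^\infty}\leq 1/16$ together with $\|f\|_{L^n}\leq\epsilon_1$ starts the induction by producing $|E_1|>\theta|B_1|$. Thus
\[
|B_1\setminus F_k|\leq\Big(1-\tfrac{\Theta-\theta}{5^n}\Big)|B_1\setminus E_k|
\leq\Big(1-\tfrac{\Theta-\theta}{5^n}\Big)|B_1\setminus T_{M^k}|+C\epsilon_0^{-1}\|f\|_{L^n}^n,
\]
where the last term comes from the weak $(1,1)$ estimate for $\mathcal{M}(|f|^n)$. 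Choosing $\epsilon_1$ small enough, a geometric--series argument yields the desired decay $|B_1\setminus T_{M^k}|\leq C\mu^k$, and \lref{le.lp} concludes the proof.

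\emph{Principal obstacle.} The main technical hurdle is the density lemma in Step 2. In the nonsingular Caffarelli--Cabr\'e case one touches by paraboloids with vanishing gradient (vertex on the graph), so the $|Du|^{\pm}$ factors are irrelevant; in our setting the vertex lies off the graph and the gradient of the touching paraboloid is nonzero and a priori not under control. The barrier function therefore must be tailored so that (i) it preserves the Pucci bounds in \eref{eq.ineqs} at every possible interior touching point, (ii) it is small enough to force a positive--measure contact set inside $B$, and (iii) its interaction with $|Du|^{1-\gamma}$ is absorbable into $M$. Designing this barrier, and coupling it with the Hardy--Littlewood truncation used on $V_B$, is where the $L^n$ datum and the singular exponent $\gamma$ interact nontrivially and is the step requiring the greatest care.
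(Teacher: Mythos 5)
Your overall strategy matches the paper's: a density (measure-theoretic ABP) lemma proved by sliding paraboloids plus a localizing barrier, iterated through the $(\theta,\Theta)$-covering lemma \lref{le.cl} to obtain geometric decay of $|B_1\setminus T_{M^k}|$, then \lref{le.lp} to pass to $W^{2,\delta}$. Step 1 is sound, and Step 2 correctly identifies both the role of the barrier and the fact that the touching paraboloid fixes $|Du|$ at the contact point in terms of $\kappa$ and $\operatorname{dist}(x_0,y)$, which is what tames the $|Du|^{\pm\gamma}$ factors.

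There is, however, a genuine gap in Step 3: the Hardy--Littlewood truncation level must scale with the paraboloid opening. You set $E_k:=T_{M^k}\cap\{\mathcal{M}(|f|^n)\leq\epsilon_0\}$ with a \emph{fixed} $\epsilon_0$, so $\beta:=|\{\mathcal M(|f|^n)>\epsilon_0\}|$ is a constant independent of $k$. The recursion you derive is $\alpha_{k+1}\leq\mu_0(\alpha_k+\beta)$ with $\alpha_k:=|B_1\setminus T_{M^k}|$, which gives $\alpha_k\to\beta\mu_0/(1-\mu_0)>0$; it does not give $\alpha_k\leq C\mu^k$ no matter how small $\epsilon_1$ is, so the weak-$L^\sigma$ bound \eref{eq.tt} and hence the $W^{2,\delta}$ estimate do not follow. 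This is precisely why the paper's density lemma (\lref{le.dl}) is stated with the $K$-dependent threshold $\epsilon_2 K^{(1-\gamma)n}$: if one actually carries through the rescaling $u\mapsto u/M^k$ you propose, the right-hand side rescales to $f/M^{k(1-\gamma)}$ and the condition $\mathcal{M}(|\tilde f|^n)\leq\epsilon_0$ becomes $\mathcal{M}(|f|^n)\leq\epsilon_0 M^{k(1-\gamma)n}$. With that scaled truncation, the weak $(1,1)$ bound yields $\beta_k\leq C\epsilon_0^{-1}M^{-k(1-\gamma)n}\|f\|_{L^n}^n$, which decays geometrically, and the recursion $\alpha_{k+1}\leq\mu_0(\alpha_k+\beta_k)$ closes (after choosing $\mu$ slightly larger than $\max\{\mu_0,M^{-(1-\gamma)n}\}$ to absorb the linear-in-$k$ factor, as in \cref{co.ed}). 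You need to carry this $K$-scaling of the maximal-function cutoff consistently through the density lemma and the iteration; as written, Step 3 does not produce the required decay.
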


Indeed, suppose $u$ satisfies
the hypothesis of \tref{th.w2d}.
Let
\[\alpha:=\left(16\norm{u}_{L^\infty(B_1)}
+\left(\epsilon_1^{-1}\norm{f}_{L^n(B_1)}\right)^{\frac{1}{1-\gamma}}
+\ve\right)^{-1}\]
for any $\ve>0$.
Then the scaled function $\wt{u}(x):=\alpha u(x)$ solves
\[|D\wt{u}|^{-\gamma}\MP_{\lambda,\Lambda}^-(D^2\wt{u})
-|D\wt{u}|^{1-\gamma}
\leq \alpha^{1-\gamma}f=:\wt{f}
\leq |D\wt{u}|^{-\gamma}\MP_{\lambda,\Lambda}^+(D^2\wt{u})
+|D\wt{u}|^{1-\gamma}
~~\mathrm{in}~B_1,\]
and satisfies $\norm{\wt{u}}_{L^\infty(B_1)}\leq 1/16$
and $\norm{\wt{f}}_{L^n(B_1)}\leq \epsilon_1$.
By \lref{le.w2dc}, we have
\[\norm{\wt{u}}_{W^{2,\delta}(B_1)}\leq C=C(n,\lambda,\Lambda,\gamma,\delta).\]
Scaling back to $u$ and letting $\ve\ra0$, we obtain
\[
\norm{u}_{W^{2,\delta}(B_1)}\leq C\alpha^{-1}
\leq C\left(\|u\|_{L^{\infty}(B_1)}+\|f\|_{L^n(B_1)}^{\frac{1}{1-\gamma}}\right).
\]
\tref{th.w2d} thereby is proved.

\subsection{Key lemmas for the proof of \lref{le.w2dc}}\label{sse.keylem}
\quad

\lref{le.w2dc} will be established via several lemmas of this subsection.
The most important one is the density estimate lemma, \lref{le.dl},
which is a key lemma in this paper
and can be viewed as a measure theoretic ABP estimate.
The strategy for the proof of \lref{le.dl}
is modified from those in \cite{Sav}, \cite{CF} and \cite{Li}.
Since the right hand side term $f$ belongs to $L^n$,
the Hardy-Littlewood maximal functions
and certain careful localization techniques
have to be employed here.
Note also that, in order to obtain global regularity,
it is crucial to show that the contact sets
are contained in the interior of $B_1$,
in the proof of the following lemma.
\begin{lemma}\label{le.dl}
Let $0\leq\gamma<1$, $0<\theta_0<1$ and $K\geq1$.
Assume that $u\in C^0(\ol{B_1})$ satisfies
\begin{equation}\label{eq.sups}
|Du|^{-\gamma}\MP_{\lambda,\Lambda}^-(D^2u)-|Du|^{1-\gamma}
\leq f~~\mbox{in}~B_1
\end{equation}
in the viscosity sense,
where $f\in C^0\cap L^n(B_1)$.
Then there exist constants
$\epsilon_2=\epsilon_2(n,\lambda,\Lambda,\theta_0)>0$,
$M=M(n,\lambda,\Lambda)>1$,
$0<\Theta=\Theta(n,\lambda,\Lambda)<1$ and
$0<\theta=\theta(n,\lambda,\Lambda,\theta_0)
<\min\{\theta_0,\Theta\}<1$ such that
if $B_r(x_0)\subset B_1$ satisfies
\begin{equation}\label{eq.gt}
\left|B_r(x_0)\cap T^-_K\cap
\{x\in B_1:\mathcal{M}(|f|^n)(x)\leq\epsilon_2 K^{(1-\gamma)n}\}\right|
\geq\theta\left|B_r(x_0)\right|,
\end{equation}
then
\[\left|B_r(x_0)\cap T^-_{KM}\right|\geq\Theta\left|B_r(x_0)\right|.\]
(For the definitions of $T^-_{K}$ and $T^-_{KM}$,
see \eref{eq.Tk-def} in \sref{se.pre}.)
\end{lemma}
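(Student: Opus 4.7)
My plan is to implement the sliding-paraboloid approach of Savin and Cabr\'e adapted to the singular setting, made \emph{global} by the insertion of a localizing barrier, with the $L^n$ datum handled via the Hardy--Littlewood maximal function. By hypothesis \eqref{eq.gt}, pick a base point $\bar x\in B_r(x_0)\cap T^-_K$ with $\mathcal{M}(|f|^n)(\bar x)\leq\epsilon_2 K^{(1-\gamma)n}$. By the definition of $T^-_K$ there exist $\bar y\in\ol{B_1}$ and $\bar c$ with $-\tfrac{K}{2}|x-\bar y|^2+\bar c\leq u(x)$ on all of $B_1$ and equality at $\bar x$; in particular this is a global lower bound for $u$. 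Since $\bar x\in B_r(x_0)$, the maximal-function hypothesis yields $\int_{B_r(x_0)}|f|^n\leq C(n)\epsilon_2 K^{(1-\gamma)n}|B_r(x_0)|$.

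Next I would construct a nonnegative $C^2$ localizing barrier $\psi$ on $\ol{B_1}$ that vanishes on a neighborhood of $B_r(x_0)$, is so large near $\p B_1$ that no paraboloid of opening $KM$ touching $u$ from below can have first contact near $\p B_1$, and whose Hessian satisfies $\mathcal{P}^+(D^2\psi)\leq CKM$. For each vertex $y$ in a small set $V\subset\ol{B_1}$ centered near $\bar y$ (of radius comparable to $r$ up to a factor of $M$), I slide $-\tfrac{KM}{2}|x-y|^2+C(y)$ from below so that, combined with the barrier correction, the first contact with $u$ occurs at a point $x(y)\in B_r(x_0)$. By the geometry of $V$ we have $|Du(x(y))|\sim KMr$ at each contact, so the singular factor $|Du|^{-\gamma}$ remains uniformly bounded.

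At $x(y)$ the combined paraboloid-plus-barrier test function is admissible in \eqref{eq.sups}, yielding
\[\mathcal{P}^-(D^2u(x(y)))\leq|Du(x(y))|^{\gamma}f(x(y))+|Du(x(y))|,\]
and the one-sided bound $D^2u(x(y))\geq-(KM)I-D^2\psi(x(y))$ from the touching combines with this trace control and AM--GM to give the pointwise Jacobian estimate
\[\bigl|\det\bigl(I+\tfrac{1}{KM}D^2u(x(y))+\tfrac{1}{KM}D^2\psi(x(y))\bigr)\bigr|\leq C\bigl(1+(KM)^{-n(1-\gamma)}|f(x(y))|^n\bigr),\]
with $C=C(n,\lambda,\Lambda)$. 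Change of variables for the contact map $y\mapsto x(y)$ then yields
\[|V|\leq\int_{x(V)\cap B_r(x_0)}|{\det}|\,dx\leq C\bigl|B_r(x_0)\cap T^-_{KM}\bigr|+C(KM)^{-n(1-\gamma)}\int_{B_r(x_0)}|f|^n\,dx\leq C\bigl|B_r(x_0)\cap T^-_{KM}\bigr|+C\epsilon_2 M^{-n(1-\gamma)}|B_r(x_0)|.\]
Choosing $V$ with $|V|\geq c(n,\lambda,\Lambda)|B_r(x_0)|$, then taking $M$ large and $\epsilon_2$ small, delivers $|B_r(x_0)\cap T^-_{KM}|\geq\Theta|B_r(x_0)|$.

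The main obstacle is the simultaneous design of the barrier $\psi$ and the vertex set $V$, which must be arranged so that (a) every contact point $x(y)$ lies inside $B_r(x_0)$, which is what turns the estimate global rather than merely interior; (b) at contact the test function, once the barrier is absorbed, corresponds to a genuine paraboloid of opening $KM$ with vertex in $\ol{B_1}$ so that $x(y)\in T^-_{KM}(u,\ol{B_1})$; and (c) $|Du(x(y))|$ stays uniformly bounded away from zero so the singular factor $|Du|^{-\gamma}$ does not destroy the trace bound. Calibrating (a)--(c) against one another is what pins down the quantitative relation among $M$, $\Theta$, $\theta$, and $\theta_0$.
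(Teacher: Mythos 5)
Your high-level strategy — sliding paraboloids combined with a localizing barrier, Hardy--Littlewood maximal functions, and a change-of-variables estimate — matches the paper's, but the implementation differs in a way that leaves a genuine gap, namely the obstacle~(b) that you identify but do not resolve. The paper decouples the barrier from the final paraboloid touching: in Step~1 it perturbs the \emph{original} $K$-paraboloid $P^-_{K,y_1}$ (opening $K$, not $KM$) by a radial bump of size $\sim Kr^2$ and, via the area formula and the maximal-function bound, extracts only a scalar proximity estimate $u(x_2)-P^-_{K,y_1}(x_2)\leq CKr^2$ at some $x_2\in B_{r/2}(x_0)$, after which the barrier plays no further role; in Step~2 it slides \emph{pure} $KM$-paraboloids with vertices in a ball $V$ centered at $\tfrac{M-1}{M}x_2+\tfrac{1}{M}y_1$, and the proximity estimate plus the elementary geometry of the difference of two paraboloids forces those contacts into $B_r(x_0)$, yielding $T^-_{KM}(V)\subset B_r(x_0)\cap T^-_{KM}$; Step~3 then runs the area formula for the pure-paraboloid contact map $x\mapsto y=x+\tfrac{1}{KM}Du(x)$. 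Your proposal instead folds the barrier directly into the $KM$-paraboloid family, so the touching test function is $-\tfrac{KM}{2}|x-y|^2+C(y)-\psi(x)$ and your Jacobian is $\det\bigl(I+\tfrac{1}{KM}D^2u+\tfrac{1}{KM}D^2\psi\bigr)$. But then the chain
\[
|V|\leq\int_{x(V)\cap B_r(x_0)}|\det|\,dx\leq C\bigl|B_r(x_0)\cap T^-_{KM}\bigr|+\cdots
\]
already presumes $x(V)\subset T^-_{KM}$, and a point where a paraboloid-plus-barrier touches $u$ from below is not automatically a point of $T^-_{KM}$; the bound $\mathcal{P}^+(D^2\psi)\leq CKM$ does not repair this, since absorbing the barrier into a paraboloid changes both the vertex and the opening. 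Closing this gap is exactly the content of the paper's Steps~1--2, which your outline omits.

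A secondary error: you claim $|Du(x(y))|\sim KMr$ "so the singular factor $|Du|^{-\gamma}$ remains uniformly bounded." This fails as $r\to0$, and in any case it is not how the singularity is treated. The paper only ever uses an \emph{upper} bound on $|Du|$ at the contact (e.g.\ $|Du|\leq CK$, or $\leq2KM$ in Step~3), which enters the viscosity inequality favorably as $\mathcal{P}^-(D^2u)\leq|Du|^{\gamma}f+|Du|$; where a lower estimate is needed (Step~1, part $1^\circ$), it comes from balancing $\mathcal{P}^-(D^2\psi)-|D\psi|\geq c\,KAe^{A}e^{-At^2}$ against $|D\psi|^{\gamma}\lesssim(KAe^Ae^{-At^2})^{\gamma}$ to conclude $K^{1-\gamma}\leq f(x_T)$, never from a uniform lower bound on the gradient. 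You should also note that your area formula requires the backward map $x\mapsto y$ (which you implicitly use via the Jacobian you write), since the forward map $y\mapsto x(y)$ is not single-valued.
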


\begin{proof}
For simplicity, we may assume
without loss of generality that $u$ is smooth in $B_1$.
Otherwise one needs to
regularize $u$ using the standard $\epsilon$-envelope
method of Jensen (see for instance \cite{CC}, \cite{Sav} and \cite{Li}).

\medskip

Take $0<\theta<1$ to be chosen later.
It follows from \eref{eq.gt} that there exists
$0<\ve=\ve(n,\theta)<1$
such that
\[B_{(1-\ve)r}(x_0)\cap T^-_K\cap\{x\in B_1:\mathcal{M}(|f|^n)(x)
\leq\epsilon_2 K^{(1-\gamma)n}\}\neq\emptyset.\]
Let $x_1\in B_{(1-\ve)r}(x_0)
\cap T^-_K\cap\{x\in B_1:\mathcal{M}(|f|^n)(x)\leq\epsilon_2K^{(1-\gamma)n}\}$.
Then, by the definition \eref{eq.Tk-def} of $T^-_K$,
there exists $ y_1\in \ol{B_1}$ such that
\begin{equation}\label{eq.PK-x1y1-u}
P^-_{K,y_1}(x)
:=-\frac{K}{2}|x-y_1|^2+u(x_1)
+\frac{K}{2}|x_1-y_1|^2\overset{x_1}\eqslantless u~~\mbox{in}~B_1.
\end{equation}
Note that $B_{3r/4}(x_1)\cap B_{r/4}(x_0)$ contains a ball of radius $\ve r/2$.
The proof now will be split into three steps.

\medskip

\emph{Step 1.}
We prove that there exist $x_2\in B_{r/2}(x_0)$ and $C=C(n,\lambda,\Lambda)>0$
such that
\begin{equation}\label{eq.x2c}
u(x_2)-P^-_{K,y_1}(x_2)\leq CKr^2.
\end{equation}

To do this, for each $y_{\scriptscriptstyle V}
\in B_{3r/4}(x_1)\cap B_{r/4}(x_0)$($\subset B_1$),
we set
\[\psi(x):=P^-_{K,y_1}(x)
+K\rho ^2\varphi\left(\frac{x-y_{\scriptscriptstyle V}}{\rho}\right),\]
where $\rho =3r/4$, $\varphi(X)=\phi(|X|)$
and $\phi(t)=e^A e^{-At^2}-1$ with $A>1$ to be determined later.
Let
$x_{\scriptscriptstyle T}
\in \ol{B_{\rho}(y_{\scriptscriptstyle V})}$
such that
\[(u-\psi)(x_{\scriptscriptstyle T})
=\underset{\ol{B_{\rho}(y_{\scalebox{0.35} V})}}\min(u-\psi).\]
Since \eref{eq.PK-x1y1-u} implies
$(u-\psi)|_{\partial B_{\rho}(y_{\scalebox{0.35} V})}\geq0$
and
\[(u-\psi)(x_{\scriptscriptstyle T})\leq(u-\psi)(x_1)
=-K\rho ^2\varphi\left(\frac{x_1-y_{\scriptscriptstyle V}}{\rho}\right)<0,\]
we conclude that
$x_{\scriptscriptstyle T}\in B_{\rho}(y_{\scriptscriptstyle V})
\subset B_r(x_0)\subset B_1$
and
\begin{equation}\label{eq.x2}
u(x_{\scriptscriptstyle T})<\psi(x_{\scriptscriptstyle T})
=P^-_{K,y_1}(x_{\scriptscriptstyle T})
+K\rho ^2\varphi
\left(\frac{x_{\scriptscriptstyle T}-y_{\scriptscriptstyle V}}{\rho}\right)
\leq P^-_{K,y_1}(x_{\scriptscriptstyle T})+e^AKr^2.
\end{equation}

\medskip

We assert that:
$\exists A^0=A^0(n,\lambda,\Lambda)>1$,
$\exists y_{\scriptscriptstyle V}^0\in B_{3r/4}(x_1)\cap B_{r/4}(x_0)$
and
$\exists x_{\scriptscriptstyle T}^0\in B_{\rho /4}(y_{\scriptscriptstyle V}^0)$
such that
\begin{equation}\label{eq.xt0}
(u-\psi)(x_{\scriptscriptstyle T}^0)
=\underset{\ol{B_{\rho}(y_{\scalebox{0.35} V}^0)}}\min (u-\psi).
\end{equation}
Once we have proved \eref{eq.xt0}, it will follow that
\[|x_{\scriptscriptstyle T}^0-x_0|\leq|x_{\scriptscriptstyle T}^0-y_{\scriptscriptstyle V}^0|+|y_{\scriptscriptstyle V}^0-x_0|<3r/16+r/4<r/2.\]
In view of \eref{eq.x2},
this establishes \eref{eq.x2c}
by setting $x_2:=x_{\scriptscriptstyle T}^0$ and $C:=e^A$.

\medskip

Hence we now need only to prove \eref{eq.xt0}.
Suppose by contradiction that
$\forall A>1$,
$\forall y_{\scriptscriptstyle V}\in B_{3r/4}(x_1)\cap B_{r/4}(x_0)$
and $\forall x_{\scriptscriptstyle T}\in B_{\rho}(y_{\scriptscriptstyle V})$,
if they satisfy
\begin{equation}\label{eq.yvxt}
(u-\psi)(x_{\scriptscriptstyle T})
=\underset{\ol{B_{\rho}(y_{\scalebox{0.35} V})}}\min (u-\psi),
\end{equation}
then
\[x_{\scriptscriptstyle T}\in
B_{\rho}(y_{\scriptscriptstyle V})
\setminus B_{\rho /4}(y_{\scriptscriptstyle V}).\]
Let us set $t:=|x_{\scriptscriptstyle T}-y_{\scriptscriptstyle V}|/{\rho}$
and denote by $T$ the set of $x_{\scriptscriptstyle T}$
while the corresponding $y_{\scriptscriptstyle V}$ runs through
$B_{3r/4}(x_1)\cap B_{r/4}(x_0)$.
Then $1/4\leq t<1$ and $T\subset B_r(x_0)\subset B_1$.
The proof now will be divided into five minor steps.

$1^\circ$~~From \eref{eq.yvxt}, we see that
\[\psi+\underset{\ol{B_{\rho}(y_{\scalebox{0.35} V})}}\min (u-\psi)
\overset{x_{\scalebox{0.35} T}}\eqslantless u~~
\mathrm{in}~B_{\rho}(y_{\scriptscriptstyle V}).\]
By the definition of the viscosity solution of \eref{eq.sups}, we obtain
\begin{equation}\label{eq.xt}
|D\psi(x_{\scriptscriptstyle T})|^{-\gamma}
\MP_{\lambda,\Lambda}^-\left(D^2\psi(x_{\scriptscriptstyle T})\right)
-|D\psi(x_{\scriptscriptstyle T})|^{1-\gamma}
\leq f(x_{\scriptscriptstyle T}).
\end{equation}

Since
\[\left|DP^-_{K,y_1}(x_{\scriptscriptstyle T})\right|\leq CK,
\quad
\left|D^2P^-_{K,y_1}(x_{\scriptscriptstyle T})\right|\leq CK,\]
\[\left|\frac{\phi_{t}}{t}\right|\leq CAe^Ae^{-At^2}
\quad\mathrm{and}\quad
\left|\phi_{tt}\right|\geq C^{-1}A^2e^Ae^{-At^2},\]
we deduce that
\[|D\psi(x_{\scriptscriptstyle T})|\leq CKAe^A e^{-At^2},\]
\[|D\psi(x_{\scriptscriptstyle T})|^{\gamma}
\leq C\left(KAe^A e^{-At^2}\right)^{\gamma},\]
and
\begin{eqnarray*}
&~&\MP_{\lambda,\Lambda}^-(D^2\psi)
(x_{\scriptscriptstyle T})-|D\psi(x_{\scriptscriptstyle T})|\\
&\geq& C^{-1}K\left(|\phi_{tt}|-1\right)
-CK\left(\left|\frac{\phi_{t}}{t}\right|+1\right)-CKAe^A e^{-At^2}\\
&\geq& C^{-1}KAe^A e^{-At^2};
\end{eqnarray*}
and consequently
\[|D\psi(x_{\scriptscriptstyle T})|^{-\gamma}
\MP_{\lambda,\Lambda}^-\left(D^2\psi(x_{\scriptscriptstyle T})\right)
-|D\psi(x_{\scriptscriptstyle T})|^{1-\gamma}
\geq C^{-1}\left(KAe^A e^{-At^2}\right)^{1-\gamma}
\geq K^{1-\gamma},\]
where $A$ is sufficiently large
and all the $C$'s depend only on $n,\lambda$ and $\Lambda$.
Combining it with \eref{eq.xt}, we obtain
\begin{equation}\label{eq.kf}
0<1\leq K^{1-\gamma}\leq f(x_{\scriptscriptstyle T}),
~\forall x_{\scriptscriptstyle T}\in T.
\end{equation}

$2^\circ$~~Since the inflection point of $\phi(t)$
is $t=(2A)^{-1/2}$, we can assert that
\begin{equation}\label{eq.d1ph}
\left|D\varphi(X)\right|\leq C(n,\lambda,\Lambda),
\end{equation}
\begin{equation}\label{eq.d2ph}
\left|\left|D^2\varphi(X)\right|\right|\leq C(n,\lambda,\Lambda)
\end{equation}
and
\begin{equation}\label{eq.d2phinv}
\left|\left|\left(D^2\varphi(X)\right)^{-1}\right|\right|
\leq C(n,\lambda,\Lambda)
\end{equation}
for all $1/4\leq|X|<1$, provided $A$ is large enough.

$3^\circ$~~For any $x_{\scriptscriptstyle T}\in T$, we have
\begin{equation}\label{eq.du}
Du(x_{\scriptscriptstyle T})=D\psi(x_{\scriptscriptstyle T})
=-K(x_{\scriptscriptstyle T}-y_1)+K\rho D\varphi
\left(\frac{x_{\scriptscriptstyle T}-y_{\scriptscriptstyle V}}{\rho}\right),
\end{equation}
and
\[D^2u(x_{\scriptscriptstyle T})
\geq D^2\psi(x_{\scriptscriptstyle T})
=-KI+KD^2\varphi\left(\frac{x_{\scriptscriptstyle T}
-y_{\scriptscriptstyle V}}{\rho}\right).\]
Recalling \eref{eq.d1ph}, \eref{eq.d2ph} and \eref{eq.kf}, it follows that
\[|Du(x_{\scriptscriptstyle T})|\leq K|x_{\scriptscriptstyle T}-y_1|+K\rho
\left|D\varphi
\left(\frac{x_{\scriptscriptstyle T}-y_{\scriptscriptstyle V}}{\rho}\right)\right|
\leq CK,\]
and
\[D^2u(x_{\scriptscriptstyle T})
\geq-KI-K\left|\left|D^2\varphi
\left(\frac{x_{\scriptscriptstyle T}-y_{\scriptscriptstyle V}}{\rho}\right)
\right|\right|I
\geq-CKI\geq-CK^{\gamma}f(x_{\scriptscriptstyle T})I.\]
Combining them with \eref{eq.xt} and \eref{eq.kf}, we deduce that
\begin{eqnarray*}
\lambda\sum_{e_i\left(D^2u(x_{\scriptscriptstyle T})\right)>0}
e_i\left(D^2u(x_{\scriptscriptstyle T})\right)
&=&\MP_{\lambda,\Lambda}^-\left(D^2u(x_{\scriptscriptstyle T})\right)
-\Lambda\sum_{e_i\left(D^2u(x_{\scriptscriptstyle T})\right)<0}
e_i\left(D^2u(x_{\scriptscriptstyle T})\right)\\
&\leq&|Du(x_{\scriptscriptstyle T})|^{\gamma}f(x_{\scriptscriptstyle T})
+|Du(x_{\scriptscriptstyle T})|+CK^{\gamma}f(x_{\scriptscriptstyle T})\\
&\leq&(CK)^{\gamma}f(x_{\scriptscriptstyle T})
+CK+CK^{\gamma}f(x_{\scriptscriptstyle T})\\
&\leq& CK^{\gamma}f(x_{\scriptscriptstyle T}).
\end{eqnarray*}
Hence the absolute value
of each eigenvalue of $D^2u(x_{\scriptscriptstyle T})$
is not greater than $CK^{\gamma}f(x_{\scriptscriptstyle T})$,
and therefore
\begin{equation}\label{eq.diju}
\left|D_{ij}u(x_{\scriptscriptstyle T})\right|
\leq CK^{\gamma}f(x_{\scriptscriptstyle T})
\quad(\forall i,j=1,2,...,n),
\end{equation}
where $C=C(n,\lambda,\Lambda)>0$.

$4^\circ$~~Write
\[X:=\frac{x_{\scriptscriptstyle T}-y_{\scriptscriptstyle V}}{\rho}\]
and
\[Y:=\frac{Du(x_{\scriptscriptstyle T})+K(x_{\scriptscriptstyle T}-y_1)}{K\rho}.\]
From \eref{eq.du},
we have $D\varphi(X)=Y$, and hence $X=(D\varphi)^{-1}(Y)$.
By the inverse function theorem, we compute
\[D_YX=\left(D^2\varphi(X)\right)^{-1}
=\left(D^2\varphi\circ(D\varphi)^{-1}(Y)\right)^{-1},\]
and
\[D_{x_{\scalebox{0.35} T}}X=D_YX\cdot D_{x_{\scalebox{0.35} T}}Y
=\left(D^2\varphi(X)\right)^{-1}
\cdot\frac{D^2u(x_{\scriptscriptstyle T})+KI}{K\rho}.\]
Since
$y_{\scriptscriptstyle V}=x_{\scriptscriptstyle T}-\rho X$,
we obtain
\[D_{x_{\scalebox{0.35} T}}{y_{\scriptscriptstyle V}}
=I-\rho D_{x_{\scalebox{0.35} T}}X
=I-\left(D^2\varphi(X)\right)^{-1}
\cdot\frac{D^2u(x_{\scriptscriptstyle T})+KI}{K}.\]
Thus, in view of \eref{eq.d2phinv}, \eref{eq.diju} and \eref{eq.kf}, we have
\begin{eqnarray*}
\left|(D_{x_{\scalebox{0.35} T}}{y_{\scriptscriptstyle V}})_{(i,j)}\right|
&\leq&\left|\delta_{ij}-\underset{k}\sum\left(\left(D^2\varphi(X)\right)^{-1}_{(i,k)}
\cdot\frac{D_{kj}u(x_{\scriptscriptstyle T})+K\delta_{kj}}{K}\right)\right|\\
&\leq&1+\left|\left|\left(D^2\varphi(\cdot)\right)^{-1}
\right|\right|_{L^{\infty}\left(B_1\setminus B_{1/4}\right)}
\cdot\underset{k}\sum\left(1+\frac{D_{kj}u(x_{\scriptscriptstyle T})}{K}\right)\\
&\leq&1+C+CK^{-(1-\gamma)}f(x_{\scriptscriptstyle T})\\
&\leq& CK^{-(1-\gamma)}f(x_{\scriptscriptstyle T})
\quad(\forall i,j=1,2,...,n),
\end{eqnarray*}
and consequently
\begin{equation}\label{eq.detdy}
\left|\det{(D_{x_{\scalebox{0.35} T}}{y_{\scriptscriptstyle V}})}\right|
\leq C(n,\lambda,\Lambda)K^{-(1-\gamma)n}|f(x_{\scriptscriptstyle T})|^n.
\end{equation}

$5^\circ$~~Consider the mapping
$y_{\scriptscriptstyle V}:x_{\scriptscriptstyle T}\mapsto y_{\scriptscriptstyle V}$,
$T \ra B_{3r/4}(x_1)\cap B_{r/4}(x_0)$, given precisely by
$y_{\scriptscriptstyle V}=x_{\scriptscriptstyle T}-\rho X$.
By the area formula and \eref{eq.detdy}, we have
\[\left|B_{3r/4}(x_1)\cap B_{r/4}(x_0)\right|
\leq\underset{T}\int \left|\det{(D_{x_{\scalebox{0.35} T}}
{y_{\scriptscriptstyle V}})}\right|dx_{\scriptscriptstyle T}
\leq CK^{-(1-\gamma)n}\underset{T}\int
|f(x_{\scriptscriptstyle T})|^ndx_{\scriptscriptstyle T}.\]
Then, in light of the facts that
$B_{3r/4}(x_1)\cap B_{r/4}(x_0)$ contains a ball of radius $\ve r/2$,
$T\subset B_{r}(x_0)\subset B_{2r}(x_1)$
and $\mathcal{M}(|f|^n)(x_1)\leq\epsilon_2K^{(1-\gamma)n}$,
we can conclude that
\begin{eqnarray*}
2^{-n}\ve^n|B_1|r^n=|B_{\ve r/2}|
&\leq& CK^{-(1-\gamma)n}\underset{B_{2r}(x_1)}
\int|f(x_{\scriptscriptstyle T})|^ndx_{\scriptscriptstyle T}\\
&\leq&CK^{-(1-\gamma)n}\mathcal{M}(|f|^n)(x_1)|B_{2r}(x_1)|\\
&\leq& 2^nC|B_1|r^n\epsilon_2.
\end{eqnarray*}

Let $\epsilon_2<2^{-n}\ve^n\cdot(2^nC)^{-1}=4^{-n}C^{-1}\ve^n$,
we arrive at a contradiction.
This proves \eref{eq.xt0}
and completes the proof of the assertion \eref{eq.x2c}.
\medskip

\emph{Step 2.}
We now prove that there exists
$M=M(n,\lambda,\Lambda)>1$ such that
\begin{equation}\label{eq.tvbt}
T^-_{KM}(V)
\subset B_r(x_0)\cap T^-_{KM},
\end{equation}
where
\[V:=\ol{B_{\frac{M-1}{M}\cdot\frac{r}{8}}
\left(\frac{M-1}{M}x_2+\frac{1}{M}y_1\right)}.\]

For each $\wt{x}\in T^-_{KM}(V)$,
there exists $\wt{y}\in V$
such that
\begin{equation*}
P^-_{KM,\wt{y}}(x)
:=-\frac{KM}{2}|x-\wt{y}|^2+u(\wt{x})
+\frac{KM}{2}|\wt{x}-\wt{y}|^2
\overset{\wt{x}}\eqslantless u~~\mbox{in}~B_1.
\end{equation*}
Since
\[P^-_{KM,\wt{y}}(x)-P^-_{K,y_1}(x)
=-\frac{K(M-1)}{2}|x-y|^2+R,\]
where
\begin{equation}\label{eq.ydef}
y:=\frac{M}{M-1}\wt{y}-\frac{1}{M-1}y_1
\end{equation}
and $R=R\left(\wt{y},K,M,y_1,\wt{x},
u(\wt{x}),x_1,u(x_1)\right)$
both do not depend on $x$, we obtain
\[P^-_{K,y_1}(x)-\frac{K(M-1)}{2}|x-y|^2+R
\overset{\wt{x}}\eqslantless u(x),~
\forall~x\in B_1.\]
Substituting $x_2\in B_{r/2}(x_0)\subset B_1$ in above,
invoking \eref{eq.x2c} and observing that $y\in \ol{B_{r/8}(x_2)}$,
we deduce that
\[R\leq u(x_2)-P^-_{K,y_1}(x_2)+\frac{K(M-1)}{2}|x_2-y|^2
\leq\left(C+\frac{M-1}{128}\right)Kr^2.\]
On the other hand, we have
\[0\leq u(\wt{x})-P^-_{K,y_1}(\wt{x})
=P^-_{KM,\wt{y}}(\wt{x})-P^-_{K,y_1}(\wt{x})
=-\frac{K(M-1)}{2}|\wt{x}-y|^2+R.\]
Hence
\[|\wt{x}-y|^2
\leq\frac{2}{M-1}\left(C+\frac{M-1}{128}\right)r^2
=\left(\frac{2C}{M-1}+\frac{1}{64}\right)r^2
\leq \frac{1}{16}r^2,\]
provided $M>1$ is sufficiently large.
Thus we obtain $|\wt{x}-y|<r/4$ and
\[|\wt{x}-x_2|
\leq|\wt{x}-y|+|y-x_2|
<r/4+r/8<r/2.\]
Therefore
\begin{equation}\label{eq.tb}
T^-_{KM}(V)\subset B_{r/2}(x_2)\subset B_r(x_0)\subset B_1.
\end{equation}

For each $\wt{y}\in V$,
we see from \eref{eq.ydef} that there exists
$y\in \ol{B_{r/8}(x_2)}$
such that
\[\wt{y}=\frac{M-1}{M}y+\frac{1}{M}y_1.\]
Since $y_1\in \ol{B_1}$
and
$y\in\ol{B_{r/8}(x_2)}\subset B_{r}(x_0)\subset B_1$,
by the convexity of $B_1$, we see that $\wt{y}\in B_1$.
Thus we have $V\subset B_1$ and hence
\begin{equation}\label{eq.tvt}
T^-_{KM}(V)\subset T^-_{KM}(\ol{B_1})=T^-_{KM}.
\end{equation}

Combining \eref{eq.tb} with \eref{eq.tvt} yields
\begin{equation*}
T^-_{KM}(V)\subset B_r(x_0)\cap T^-_{KM},
\end{equation*}
which is exactly the assertion \eref{eq.tvbt}.
\medskip

\emph{Step 3.}
We assert that
\begin{equation}\label{eq.vct}
|V|\leq C\left|T^-_{KM}(V)\right|,
\end{equation}
where $C=C(n,\lambda,\Lambda)>0$.
If we have proved this,
we will conclude from \eref{eq.tvbt} that
\[\left|B_r(x_0)\cap T^-_{KM}\right|
\geq\left|T^-_{KM}(V)\right|
\geq\frac{1}{C}|V|
=\frac{1}{C}\left(\frac{M-1}{8M}\right)^n|B_{r}|
=:\Theta\left|B_r(x_0)\right|,\]
which proves the lemma by taking
$0<\theta=\theta(n,\lambda,\Lambda,\theta_0)
:=\frac{1}{2}\min\{\theta_0,\Theta\}<1$.

Hence we now need only to prove \eref{eq.vct}.
For each $x\in T^-_{KM}(V)$,
there exists a unique $y\in V$ satisfying
\[Du(x)=-KM(x-y)\]
and
\[D^2u(x)\geq-KMI.\]
Consider the mapping $y:x\mapsto y$,
$T^-_{KM}(V)\ra V$, given precisely by
\[y=x+\frac{1}{KM}Du(x).\]
Since
\[|Du(x)|=KM|x-y|\leq2KM,\]
and
\[D_xy=I+\frac{1}{KM}D^2u(x)\geq0,\]
we deduce from \eref{eq.sups} and \eref{eq.tb} that
\begin{eqnarray*}
\lambda\textrm{tr}(D_xy)=\MP_{\lambda,\Lambda}^-(D_xy)
&\leq&\MP_{\lambda,\Lambda}^+(I)
+\frac{1}{KM}\MP_{\lambda,\Lambda}^-(D^2u(x))\\
&\leq& n\Lambda+\frac{1}{KM}\left(|Du(x)|^{\gamma}|f(x)|
+|Du(x)|\right)\\
&\leq& n\Lambda+\frac{2|f(x)|}{(KM)^{1-\gamma}}+2\\
&\leq& C\left(1+\frac{|f(x)|}{(KM)^{1-\gamma}}\right),
\end{eqnarray*}
and
\begin{equation}\label{dxylc}
0\leq\det(D_xy)\leq\left(\frac{\textrm{tr}(D_xy)}{n}\right)^n
\leq C\left(1+\frac{|f(x)|^n}{(KM)^{(1-\gamma)n}}\right),
\end{equation}
where the constants $C$
(and all the $C$'s in the rest of this proof)
depend only on $n,\lambda$ and $\Lambda$.
Hence we can conclude, by the area formula, that
\[|V|\leq\underset{T^-_{KM}(V)}\int\det(D_xy)dx
\leq C\left|T^-_{KM}(V)\right|
+\frac{C}{(KM)^{(1-\gamma)n}}\underset{T^-_{KM}(V)}\int|f(x)|^ndx.\]
In view of the facts that
$T^-_{KM}(V)\subset B_{r}(x_0)\subset B_{2r}(x_1)$
and $\mathcal{M}(|f|^n)(x_1)\leq\epsilon_2K^{(1-\gamma)n}$,
we have
\begin{eqnarray*}
|V|&\leq& C\left|T^-_{KM}(V)\right|
+\frac{C}{(KM)^{(1-\gamma)n}}\underset{B_{2r}(x_1)}\int|f(x)|^ndx\\
&\leq& C\left|T^-_{KM}(V)\right|
+\frac{Cr^n}{K^{(1-\gamma)n}}\cdot\mathcal{M}(|f|^n)(x_1)\\
&\leq& C\left|T^-_{KM}(V)\right|+\epsilon_2Cr^n.
\end{eqnarray*}
Taking $\epsilon_2>0$ to be sufficiently small such that
\[\epsilon_2Cr^n<\frac{1}{2}|V|
=\frac{1}{2}\left(\frac{M-1}{8M}\right)^n|B_1|r^n,\]
we obtain
\[\frac{1}{2}|V|\leq C\left|T^-_{KM}(V)\right|,\]
which implies \eref{eq.vct} and completes the proof of \lref{le.dl}.
\end{proof}

With the \lref{le.dl} in hand,
we can now prove the following measure decay estimate
which concerns the decay of $\left|B_1\setminus T^-_{t}\right|$ in $t$.
\begin{lemma}\label{le.ed}
Let $0\leq\gamma<1$ and $K\geq1$.
Assume that $u\in C^0(\ol{B_1})$ satisfies
\begin{equation*}
|Du|^{-\gamma}\MP_{\lambda,\Lambda}^-(D^2u)-|Du|^{1-\gamma}
\leq f~~\mbox{in}~B_1
\end{equation*}
in the viscosity sense,
where $f\in C^0\cap L^n(B_1)$.
Then there exist constants
$\epsilon_1=\epsilon_1(n,\lambda,\Lambda)>0$,
$\epsilon_2=\epsilon_2(n,\lambda,\Lambda)>0$,
$M=M(n,\lambda,\Lambda)>1$
and $0<\mu_0=\mu_0(n,\lambda,\Lambda)<1$
such that
if $\underset{B_1}{\osc}~u\leq1/8$
and $\norm{f}_{L^n(B_1)}\leq\epsilon_1$,
then
\[\left|B_1\setminus T^-_{KM}\right|
\leq \mu_0\left(\left|B_1\setminus T^-_{K}\right|
+\left|\left\{x\in B_1:\mathcal{M}(|f|^n)(x)>
\epsilon_2 K^{(1-\gamma)n}\right\}\right|\right).\]
\end{lemma}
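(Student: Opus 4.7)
\emph{Proof proposal.} The plan is to derive the measure decay from the $(\theta,\Theta)$-type covering lemma (\lref{le.cl}) applied to the nested pair
\[
E := T^-_K \cap \set{x \in B_1 : \mathcal{M}(|f|^n)(x) \leq \epsilon_2 K^{(1-\gamma)n}}, \qquad F := T^-_{KM},
\]
where $\epsilon_2$, $M$, $\theta$, $\Theta$ come from \lref{le.dl} applied with a small auxiliary parameter $\theta_0\in(0,1)$ to be fixed at the end. The inclusion $E \subset F$ reduces to $T^-_K \subset T^-_{KM}$: if $P^-_{K,y}$ touches $u$ at $x_0$ from below with $y \in \ol{B_1}$, then $y' := x_0 + (y-x_0)/M \in \ol{B_1}$ and $P^-_{KM,y'}$ shares the tangent of $P^-_{K,y}$ at $x_0$ while lying everywhere below it (the difference is a downward-concave quadratic vanishing to first order at $x_0$), so $P^-_{KM,y'}$ also touches $u$ from below at $x_0$. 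Once the two hypotheses of \lref{le.cl} are verified, its conclusion together with the inclusion $B_1 \setminus E \subset (B_1 \setminus T^-_K) \cup \set{\mathcal{M}(|f|^n) > \epsilon_2 K^{(1-\gamma)n}}$ yields
\[
|B_1 \setminus T^-_{KM}| \leq \mu_0 \left(|B_1 \setminus T^-_K| + |\set{\mathcal{M}(|f|^n) > \epsilon_2 K^{(1-\gamma)n}}|\right),
\]
with $\mu_0 := 1 - (\Theta - \theta)/5^n \in (0,1)$, which is exactly the asserted decay.

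Hypothesis (ii) of \lref{le.cl} is precisely the content of \lref{le.dl}. The delicate step is hypothesis (i), $|E| > \theta|B_1|$, which I would derive from a uniform initial density estimate $|T^-_K| \geq c_0 = c_0(n,\lambda,\Lambda) > 0$ valid for every $K \geq 1$. To establish it, consider the gradient map $\Phi : T^-_K \ra \ol{B_1}$, $\Phi(x) := x + Du(x)/K$. For any $y \in B_{1/2}$, the contact point $x_0$ of $P^-_{K,y}$ satisfies $u(x_0) + \tfrac{K}{2}|x_0 - y|^2 \leq u(y)$, whence $|x_0 - y| \leq \sqrt{2\osc u / K} \leq 1/2$ (using $\osc u \leq 1/8$ and $K \geq 1$). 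Hence $x_0 \in B_1$, $\Phi(x_0) = y$, and $B_{1/2} \subset \Phi(T^-_K)$. On $T^-_K$, $D\Phi = I + D^2u/K \geq 0$; combining the pointwise contact bound $|Du(x_0)| = K|x_0 - y| \leq \sqrt{K}/2$ with the viscosity inequality $\MP^-(D^2u) \leq |Du|^\gamma |f| + |Du|$ (used in the a.e.\ Alexandrov sense on $T^-_K$, exactly as in the proof of \lref{le.dl}) yields $\mathrm{tr}(D\Phi) \leq C(1 + |f|)$ uniformly in $K \geq 1$, since every factor of $K$ appears with a nonpositive exponent. The area formula and AM-GM then give
\[
|B_{1/2}| \leq \int_{T^-_K} \det D\Phi \, dx \leq C |T^-_K| + C\|f\|_{L^n(B_1)}^n \leq C |T^-_K| + C\epsilon_1^n,
\]
so $|T^-_K| \geq c_0 := |B_{1/2}|/(2C)$ once $\epsilon_1$ is small enough.

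Combining this with the weak-$(1,1)$ estimate $|\set{\mathcal{M}(|f|^n) > t}| \leq C\|f\|_{L^n(B_1)}^n / t$ gives $|E| \geq c_0 - C\epsilon_1^n/\epsilon_2 \geq c_0/2$ after a further shrinking of $\epsilon_1$. Finally, choosing the auxiliary $\theta_0$ small enough that $\theta \leq \theta_0/2 < c_0/(2|B_1|)$ (which is possible, since the proof of \lref{le.dl} takes $\theta = \tfrac{1}{2}\min\set{\theta_0,\Theta}$) verifies hypothesis (i) and completes the proof. The chief obstacle is the initial density bound $|T^-_K| \geq c_0$: it must hold uniformly for every $K \geq 1$, so it requires the joint use of the oscillation hypothesis (to keep contact points interior and bound $|Du|$ by $\sqrt{K}/2$), the pointwise second-order inequality $D^2u \geq -KI$ on $T^-_K$, and the equation's upper bound on $\MP^-(D^2u)$; the pointwise use of $D^2u$ on the non-smooth set $T^-_K$ is legitimized by Alexandrov's theorem or the $\ve$-envelope regularization already used in the proof of \lref{le.dl}.
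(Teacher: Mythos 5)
Your proposal is correct and follows essentially the same route as the paper: an initial uniform density bound $|T^-_K|\geq c_0$ obtained via the gradient map $x\mapsto x+Du(x)/K$, the area formula, and the oscillation hypothesis to keep contact points interior; then a weak-$(1,1)$ estimate to pass to the set $E$; then the $(\theta,\Theta)$-covering lemma with hypothesis (ii) supplied by Lemma~\ref{le.dl}. The only (harmless) cosmetic differences are that you explicitly verify the nesting $T^-_K\subset T^-_{KM}$ (the paper leaves it implicit), you integrate over all of $T^-_K$ rather than just $T^-_K(\ol{B_{1/2}})$, and you use the sharper gradient bound $|Du|\leq\sqrt{K}/2$ in place of the paper's cruder $|Du|\leq 2K$; both suffice since $\gamma<1$.
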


\begin{proof}
We first prove that
there exists $0<\theta_0=\theta_0(n,\lambda,\Lambda)<1$
such that
\begin{equation}\label{eq.tt0}
\left|B_1\cap T^-_K\cap\left\{x\in B_1:\mathcal{M}(|f|^n)(x)
\leq\epsilon_2K^{(1-\gamma)n}\right\}\right|\geq\theta_0|B_1|.
\end{equation}

For each $\wt{x}\in T^-_K\left(\ol{B_{1/2}}\right)$,
there exists $\wt{y}\in \ol{B_{1/2}}$
such that
\[u(\wt{x})+\frac{K}{2}|\wt{x}-\wt{y}|^2
=\underset{x\in \ol{B_1}}\min
\left(u(x)+\frac{K}{2}|x-\wt{y}|^2\right)=:m(\wt{y}).\]
Hence we conclude that
$-\frac{K}{2}|x-\wt{y}|^2+m(\wt{y})
\overset{\wt{x}}\eqslantless u$ in $B_1$.
In particular, we have
$m(\wt{y})\leq u(\wt{y})$
and
$-\frac{K}{2}|\wt{x}-\wt{y}|^2+m(\wt{y})=u(\wt{x})$.
Subtracting one from the other, we deduce that
\[\frac{K}{2}|\wt{x}-\wt{y}|^2
\leq u(\wt{y})-u(\wt{x})
\leq\underset{B_1}{\osc}~u\leq1/8,\]
which implies $|\wt{x}-\wt{y}|\leq1/2$.
Thus $\wt{x}\in B_1$ and hence
\begin{equation}\label{eq.tkb}
T^-_K\left(\ol{B_{1/2}}\right)\subset B_1.
\end{equation}

Consider, as in the proof of \lref{le.dl},
the mapping $\wt{y}:\wt{x}\mapsto\wt{y}$,
$T^-_K\left(\ol{B_{1/2}}\right)\ra \ol{B_{1/2}}$,
given by
\[\wt{y}=\wt{x}+\frac{1}{K}Du(\wt{x}).\]
Since $|Du(\wt{x})|=K|\wt{x}-\wt{y}|\leq2K$,
we conclude, as in \eref{dxylc}, that
\[0\leq\det(D_{\wt{x}}\wt{y})
\leq C\left(1+\frac{|f(\wt{x})|^n}{K^{(1-\gamma)n}}\right)
\leq C(1+|f(\wt{x})|^n),\]
where $C=C(n,\lambda,\Lambda)>0$.
Thus it follows from the area formula and \eref{eq.tkb} that
\[2^{-n}|B_1|=\left|\ol{B_{1/2}}\right|
\leq \underset{T^-_K\left(\ol{B_{1/2}}\right)}\int
\det(D_{\wt{x}}\wt{y})d\wt{x}
\leq C\left|T^-_K\left(\ol{B_{1/2}}\right)\right|+C\underset{B_1}\int|f|^n.\]
Let $\norm{f}^n_{L^n(B_1)}\leq 2^{-(n+1)}|B_1|\cdot C^{-1}$.
We obtain
\[2^{-(n+1)}|B_1|\leq C\left|T^-_K\left(\ol{B_{1/2}}\right)\right|
=C|B_1\cap T^-_K\left(\ol{B_{1/2}}\right)|\leq C|B_1\cap T^-_K|,\]
where we have used \eref{eq.tkb} again. Thus
\begin{equation}\label{eq.tt1}
|B_1\cap T^-_K|\geq2^{-(n+1)}C^{-1}|B_1|=:\theta_1|B_1|,
\end{equation}
where $0<\theta_1=\theta_1(n,\lambda,\Lambda)<1$.

On the other hand, by the weak type (1,1) property, we have
\begin{eqnarray*}
&~&\left|\left\{x\in B_1:\mathcal{M}(|f|^n)(x)>
\epsilon_2K^{(1-\gamma)n}\right\}\right|\\
&\leq& C(n)\left(\epsilon_2K^{(1-\gamma)n}\right)^{-1}\norm{|f|^n}_{L^1(B_1)}
\leq C(n){\epsilon_2}^{-1}\norm{f}^n_{L^n(B_1)}.
\end{eqnarray*}
Let
\[\norm{f}^n_{L^n(B_1)}\leq\epsilon_1^n
:=\min\left\{2^{-(n+1)}|B_1|\cdot C^{-1},~
\frac{\theta_1}{2}|B_1|\cdot C(n)^{-1}\epsilon_2\right\}.\]
We obtain
\[\left|\left\{x\in B_1:\mathcal{M}(|f|^n)(x)>
\epsilon_2K^{(1-\gamma)n}\right\}\right|
\leq \frac{\theta_1}{2}|B_1|,\]
and hence
\begin{equation}\label{eq.mftt1}
\left|\left\{x\in B_1:\mathcal{M}(|f|^n)(x)\leq
\epsilon_2K^{(1-\gamma)n}\right\}\right|
\geq \left(1-\frac{\theta_1}{2}\right)|B_1|.
\end{equation}
Combining \eref{eq.tt1} with \eref{eq.mftt1}, we conclude that
\[\left|B_1\cap T^-_K\cap
\left\{x\in B_1:\mathcal{M}(|f|^n)(x)\leq\epsilon_2K^{(1-\gamma)n}\right\}\right|
\geq\frac{\theta_1}{2}|B_1|=:\theta_0|B_1|,\]
which is \eref{eq.tt0}.

\medskip

Now according to \eref{eq.tt0} and the conclusion of \lref{le.dl},
applying the covering lemma, \lref{le.cl}, to
\[E:=T^-_K\cap\left\{x\in B_1:\mathcal{M}(|f|^n)(x)\leq
\epsilon_2K^{(1-\gamma)n}\right\}\]
and $F:=T^-_{KM}$, we thus conclude that
\begin{eqnarray*}
\left|B_1\setminus T^-_{KM}\right|
&=&|B_1\setminus F|
\leq\left(1-\frac{\Theta-\theta}{5^n}\right)|B_1\setminus E|
=:\mu_0|B_1\setminus E|\\
&\leq&\mu_0\left(|B_1\setminus T^-_K|
+\left|\left\{x\in B_1:\mathcal{M}(|f|^n)(x)>
\epsilon_2K^{(1-\gamma)n}\right\}\right|\right).
\end{eqnarray*}
This finishes the proof of \lref{le.ed}.
\end{proof}

\begin{corollary}\label{co.ed}
Under the assumptions of \lref{le.ed}, we have
\[\left|B_1\setminus T^-_{M^k}\right|
\leq C\mu^k \quad (k=0,1,2,...),\]
where $C=C(n,\lambda,\Lambda,\gamma)>0$
and $0<\mu=\mu(n,\lambda,\Lambda,\gamma)<1$.
\end{corollary}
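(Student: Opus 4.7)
The plan is to iterate the one-step decay inequality of \lref{le.ed} along the geometric sequence $K = M^k$, and to control the cumulative contribution of the maximal-function error term by its own geometric decay, obtaining a single geometric bound.

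First, I would set $a_k := \abs{B_1 \setminus T^-_{M^k}}$ and $b_k := \abs{\{x \in B_1 : \mathcal{M}(\abs{f}^n)(x) > \epsilon_2 M^{(1-\gamma)nk}\}}$. Since $M^k \geq 1$ for every $k \geq 0$, the hypotheses of \lref{le.ed} are met at each level, giving the linear recursion $a_{k+1} \leq \mu_0 (a_k + b_k)$. Unrolling it from the trivial bound $a_0 \leq \abs{B_1}$ yields
\[
a_k \leq \mu_0^k \abs{B_1} + \sum_{j=0}^{k-1} \mu_0^{k-j} b_j.
\]

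Next, I would estimate $b_j$ by applying the weak-type $(1,1)$ property of $\mathcal{M}$ to the function $\abs{f}^n \in L^1(B_1)$; combined with the assumption $\norm{f}_{L^n(B_1)} \leq \epsilon_1$, this gives $b_j \leq C(n)\epsilon_2^{-1}\epsilon_1^n M^{-(1-\gamma)nj} =: C r^j$ where $r := M^{-(1-\gamma)n} \in (0,1)$ and $C$ depends only on $n, \lambda, \Lambda, \gamma$. Substituting this bound, the remaining sum is the discrete convolution of two geometric sequences with ratios $\mu_0, r \in (0,1)$. An elementary closed-form computation (giving $\frac{\mu_0}{\mu_0-r}(\mu_0^k - r^k)$ when $\mu_0 \neq r$, and $k\mu_0^k$ when $\mu_0 = r$) shows the sum is bounded by $C\mu^k$ for any fixed $\mu$ with $\max\{\mu_0, r\} \leq \mu < 1$, where one takes $\mu$ strictly larger than $\mu_0$ in the borderline equality case in order to absorb the factor $k$. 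Combining with the leading term $\mu_0^k\abs{B_1}$ (enlarging $\mu$ above $\mu_0$ if necessary) yields $a_k \leq C\mu^k$, which is the desired conclusion.

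There is no genuine analytic obstacle; the argument is essentially a bookkeeping iteration built on top of \lref{le.ed} and the weak $(1,1)$ estimate. The only points to monitor are that the hypotheses of \lref{le.ed} remain valid at each step (trivially, because $M^k \geq 1$ and the assumptions on $u$ and $f$ are unchanged through the iteration), and that the final constants $C$ and $\mu$ depend only on $n, \lambda, \Lambda, \gamma$ — automatic because $\mu_0, M, \epsilon_1, \epsilon_2$ from \lref{le.ed} all share this dependence.
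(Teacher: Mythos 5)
Your proposal matches the paper's proof essentially step for step: same recursion $a_{k+1}\leq\mu_0(a_k+b_k)$ from \lref{le.ed}, same weak type $(1,1)$ bound $b_j\leq C r^j$ with $r=M^{-(1-\gamma)n}$, and the same convolution-of-geometric-sequences estimate absorbed into $C\mu^k$ by taking $\mu$ strictly between $\max\{\mu_0,r\}$ and $1$. No issues.
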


\begin{proof}
Let
\[\alpha_k:=\left|B_1\setminus T^-_{M^k}\right|\]
and
\[\beta_k:=\left|\left\{x\in B_1:\mathcal{M}(|f|^n)(x)
>\epsilon_2M^{(1-\gamma)nk}\right\}\right|.\]
Applying \lref{le.ed} to $K=M^k$, $\forall k=0,1,2,...$, we obtain
\[\alpha_{k+1}\leq\mu_0(\alpha_k+\beta_k)
\quad (\forall k=0,1,2,...).\]

Using the weak type (1,1) property,
recalling $\norm{f}_{L^n(B_1)}\leq\epsilon_1$,
and remembering that $\epsilon_1$ and $\epsilon_2$
depend only on $n,\lambda$ and $\Lambda$, we conclude that
\[
\beta_k\leq C(n)\left(\epsilon_2M^{(1-\gamma)nk}\right)^{-1}
\norm{f}^n_{L^n(B_1)}
\leq C_0\left(M^{-(1-\gamma)n}\right)^{k}|B_1|,
\]
where $C_0=C_0(n,\lambda,\Lambda)>0$.
Thus we have
\begin{eqnarray*}
\sum_{i=0}^{k-1}\mu_0^{k-i}\beta_i
&\leq& C_0|B_1|\sum_{i=0}^{k-1}\mu_0^{k-i}\left(M^{-(1-\gamma)n}\right)^{i}\\
&\leq& C_0|B_1|\sum_{i=0}^{k-1}\mu_1^k
=C_0|B_1|k\mu_1^k \quad (\forall k=1,2,3,...),
\end{eqnarray*}
and hence
\[\alpha_k
\leq\mu_0^k|B_1|+\sum_{i=0}^{k-1}\mu_0^{k-i}\beta_i
\leq(1+C_0k)|B_1|\mu_1^k
\leq C\mu^k  \quad (\forall k=1,2,3,...),\]
where
$\mu_1:=\max\left\{\mu_0,M^{-(1-\gamma)n}\right\}\in(0,1)$,
$\mu:=(1+\mu_1)/2\in(\mu_1,1)$ and
$C:=\max_{t\in \R}\left\{\mu^{-t}\mu_1^{t}(1+C_0t)|B_1|\right\}
\in(0,+\infty)$
are all constants depending only on $n,\lambda,\Lambda$ and $\gamma$.
This finishes the proof of \textsl{Corollary \ref{co.ed}}.
\end{proof}

\subsection{Proof of \lref{le.w2dc}}\label{sse.pf-le}
\quad

From the above \cref{co.ed}, \lref{le.w2dc} follows easily.
\begin{proof}[Proof of \lref{le.w2dc}]
Since $\MP^+(D^2u)=-\MP^-(D^2(-u))$,
by the second inequality of \eref{eq.ineqs},
it is clear that $-u$ satisfies
all the assumptions of \lref{le.ed}
and hence of \cref{co.ed}.
Since, by definition,
$T^+_\kappa\left(u,\ol{B_1}\right)
=T^-_\kappa\left(-u,\ol{B_1}\right)$,
applying \cref{co.ed} to $-u$, we get
$\left|B_1\setminus T^+_{M^k}\right|\leq C\mu^k$,
$\forall k\in\Z^+$.
Thus
\begin{eqnarray*}
\left|B_1\setminus T_{M^k}\right|
=\left|B_1\setminus \left(T^-_{M^k}\cap T^+_{M^k}\right)\right|
\leq \left|B_1\setminus T^-_{M^k}\right|
+\left|B_1\setminus T^+_{M^k}\right|
\leq C\mu^k,~\forall k\in\Z^+;
\end{eqnarray*}
and hence
\begin{equation}\label{eq.tt}
\left|B_1\setminus T_{t}\right|\leq Ct^{-\sigma},
~\forall t>0,
\end{equation}
where $\sigma:=-\log_M\mu$.
Invoking \lref{le.lp}, we deduce that
$\norm{D^2u}_{L^\delta(B_1)}\leq C$
(see also \cite[proposition 1.1]{CC}).
By the interpolation theorem (see \cite[Theorem 7.28]{GT}),
we thus obtain $\norm{u}_{W^{2,\delta}(B_1)}\leq C$.
This completes the proof of \lref{le.w2dc}.
\end{proof}

\begin{remark}
For heuristic purpose, we give for $u\in C^2(B_1)$
the simple and full details
of deducing $\norm{D^2u}_{L^\delta(B_1)}\leq C$ from \eref{eq.tt}.
Indeed, since
\[B_1\cap T_t\subset\left\{x\in B_1:-tI\leq D^2u(x)\leq tI\right\}
\subset\left\{x\in B_1:\left|D^2u(x)\right|\leq \sqrt{n}t\right\},\]
we have
\[\left\{x\in B_1:\left|D^2u(x)\right|>\sqrt{n}t\right\}
\subset B_1\setminus T_t.\]
Hence
\[\left|\left\{
x\in B_1:|D^2u(x)|>\sqrt{n}t\right\}\right|
\leq|B_1\setminus T_t|\leq Ct^{-\sigma}.\]
Using \lref{le.lp}, we thus obtain
\begin{eqnarray*}
\norm{D^2u}^\delta_{L^\delta(B_1)}
&\leq& C(n,M,\delta)\left(|B_1|+\underset{k}\sum M^{\delta k}
\left|\left\{x\in B_1:\left|D^2u(x)\right|>\sqrt{n}M^k\right\}\right|\right)\\
&\leq& C(n,M,\delta)\left(|B_1|+C\underset{k}\sum M^{(\delta-\sigma)k}\right)
\leq C(n,\lambda,\Lambda,\delta).
\end{eqnarray*}
\end{remark}



\begin{thebibliography}{00}

\bibitem{ASS}
S. N. Armstrong, L. Silvestre, C. K. Smart,
Partial regularity of solutions
of fully nonlinear uniformly elliptic equations,
Comm. Pure Appl. Math., 65 (2012), no. 8, 1169--1184.

\bibitem{BD1}
I. Birindelli, F. Demengel,
Comparison principle and Liouville type results
for singular fully nonlinear operators,
Ann. Fac. Sci. Toulouse Math. 13 (2004) 261--287.

\bibitem{BD2}
I. Birindelli, F. Demengel,
Regularity and uniqueness of the first eigenfunction
for singular fully nonlinear operators,
J. Differential Equations 249 (2010) 1089--1110.

\bibitem{Cab}
X. Cabr\'{e},
Nondivergent elliptic equations on manifolds with nonnegative curvature,
Commun. Pur. Appl. Math. 50 (7) (1997) 623--665.

\bibitem{Caf}
L.A. Caffarelli,
Interior a priori estimates for solutions of fully nonlinear equations,
Ann. Math. 130 (1989), 189--213.

\bibitem{CC}
L.A. Caffarelli, X. Cabr{\'e},
Fully nonlinear elliptic equations, Colloquium Publications 43,
American Mathematical Society, Providence, RI, 1995.

\bibitem{CF}
M. Colombo, A. Figalli,
Regularity results for very degenerate elliptic equations,
Journal De Math{\'e}matiques Pures Et Appliqu{\'e}es
 101 (1) (2014) 94--117.

\bibitem{Dan}
J.-P. Daniel,
Quadratic expansions and partial regularity
for fully nonlinear uniformly parabolic equations,
Calc. Var. Partial Dif. 54 (1) (2015) 183--216.

\bibitem{DFQ1}
G. D\'{a}vila, P. Felmer, A. Quaas,
Alexandroff-Bakelman-Pucci estimate for singular
or degenerate fully nonlinear elliptic equations,
Comptes Rendus Mathematique 347 (2009) 1165--1168.

\bibitem{DFQ2}
G. D\'{a}vila, P. Felmer, A. Quaas,
Harnack inequality for singular fully nonlinear operators
and some existence results,
Calc. Var. Partial Dif. 39 (2010) 557--578.

\bibitem{GT}
D. Gilbarg, N.S. Trudinger,
Elliptic Partial Differential Equations of Second Order (3rd edition),
Springer-Verlag, Berlin, 1998.

\bibitem{IS}
C. Imbert, L. Silvestre,
Estimates on elliptic equations that hold only where the gradient is large,
J. Eur. Math. Soc. 18(6) (2016) 1321--1338.

\bibitem{Li}
D.-S. Li, Z.-S. Li,
Global $W^{2,\delta}$ estimates for a type of
singular fully nonlinear elliptic equations,
Math. Z., to appear.

\bibitem{Lin}
F.-H. Lin,
Second derivative $L^p$-estimates for elliptic equations of nondivergent type,
P. Am. Math. Soc. 96 (3) (1986) 447--451.

\bibitem{Sav}
O. Savin,
Small perturbation solutions for elliptic equations,
Commun. Part. Diff. Eq. 32 (2007) 557--578.

\bibitem{Tol}
P. Tolksdorf,
Regularity for a more general class of quasilinear elliptic equations,
J. Differential Equations 51 (1984) 126--150.

\bibitem{Yuan}
Y. Yuan,
A priori estimates for solutions
of fully nonlinear special Lagrangian equations,
Ann. Inst. H. Poincar\'{e} Anal. Non Lin\'{e}aire, 18 (2001), 261--270.

\end{thebibliography}
\end{document}